%
%
%
%
\documentclass{amsart}
\usepackage{cite}
\usepackage[all]{xy}
\usepackage{graphicx}
\usepackage{amssymb}
\usepackage{verbatim}
\usepackage[numbers,sort&compress]{natbib} 
\usepackage[bookmarksnumbered, bookmarksopen,
colorlinks,citecolor=blue,linkcolor=blue]{hyperref}

\newtheorem{Theorem}{Theorem}[section]
\newtheorem{Lemma}[Theorem]{Lemma}
\theoremstyle{Definition}

\newtheorem{Example}[Theorem]{Example}

\newtheorem{Corollary}[Theorem]{Corollary}

\theoremstyle{Remark}
\newtheorem{Remark}[Theorem]{Remark}

\theoremstyle{notation}

\numberwithin{equation}{section}

 \def\3{\operatorname{3}}



\begin{document}

\title{Formality on the rationalizations of simply connected CW complexes}

\author{Jingwen Gao}
\address{School of Mathematical Sciences,
         Nankai University,
         Tianjin 300071, P.R.China}

\email{2120190022@mail.nankai.edu.cn}
\thanks{The authors were supported in part by Tianjin Natural Science Foundation (Grant No. 19JCYBJC30200).}
\subjclass[2010]{55P62}

\author{Xiugui Liu}
\address{School of Mathematical Sciences and LPMC,
         Nankai University,
         Tianjin 300071, P.R.China}

\email{xgliu@nankai.edu.cn}


\date{\today}


\keywords{ formality, spatial realization, rational homotopy theory.}

\begin{abstract}
In this paper, we show that for a simply connected CW complex $Y$ with finite dimensional rational cohomology, if $H^{*}(Y;\mathbb{Q})$ vanishes in odd degrees and satisfies a specific condition,  then the rationalization of $Y$, $Y_\mathbb{Q}$, is formal.
\end{abstract}

\maketitle

\section{Introduction}
We assume that all topological spaces in this paper are path connected.
For a commutative cochain algebra $(A,d)$ satisfying $H^0(A)=\mathbb{K}$ where $\mathbb{K}$ is a field of characteristic $0$, $(A,d)$ is called formal if it is weakly equivalent to the cochain algebra $(H(A;\mathbb{K}), 0)$. A path connected topological space, $X$, is formal if $A_{PL}(X; \mathbb{Q})$ is a formal cochain algebra.

Formality is an important property for both topological spaces  and  cochain algebras. For example, we use formality of K$\ddot{{\rm a}}$hler manifolds to distinguish non simply connected symplectic manifolds with no K$\ddot{{{\rm a}}}$hler structure \rm{\cite{benson, lupton}}. But there  exist non formal simply connected spaces, I. K. Babenko and I. A. Taimanov \rm{\cite{ik}} have constructed infinitely many non formal simply connected manifolds.
The aims of this paper is to find a class of spaces whose rationalizations are formal.

Let $Y$ be a simply connected CW complex with finite dimensional rational cohomology.
Let $V\subset H^{+}(Y;\mathbb{Q})$ be a graded vector space such that
$$H^+(Y;\mathbb{Q})=H^+(Y;\mathbb{Q})\cdot{H^+(Y;\mathbb{Q})}\oplus{V}.$$  The inclusion $V\hookrightarrow {H^+(Y;\mathbb{Q})}$ extends to a morphism $\varphi:(\Lambda{V},0)\rightarrow({H^*(Y;\mathbb{Q}),0)}$ of cochain algebras. Let $V^{\prime}=\{v_j|~j\in A\}$ be the set of generators of $V$, where $A$ is a totally ordered set.
The following is our main result of this paper.
\begin{Theorem}\label{1.1} With notations as above.
	Let $E=\{\varphi(v_{t_1}v_{t_2}\cdots v_{t_k})\neq{0}|~v_
	{t_j}\in V^{\prime},~t_j\in A,~t_1\leq t_2\leq\cdots\leq t_k,~k\geq 2,~k\in{\mathbb{Z}}\}$.
	If $H^*(Y;\mathbb{Q})$ vanishes in odd degrees and  satisfies one of the following conditions:
	\begin{itemize}
		\item[(i)]
		$ E=\emptyset$, that is, the cup product is trivial;
		\item[(ii)]
		$E$ is linearly independent,
	\end{itemize}
 then the rationalization of $Y$,
	$Y_\mathbb{Q}$, is formal.
	
\end{Theorem}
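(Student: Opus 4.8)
The plan is to prove the stronger statement that the graded algebra $H:=H^*(Y;\mathbb{Q})$ is \emph{intrinsically formal}, i.e. that every commutative cochain algebra with cohomology $H$ is formal. Applying this to $A_{PL}(Y;\mathbb{Q})$ then shows that $Y$, and hence its rationalization $Y_\mathbb{Q}$ (the spatial realization of the minimal Sullivan model of $A_{PL}(Y;\mathbb{Q})$), is formal. So I work entirely with commutative cochain algebras and Sullivan models.

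First I would extract the algebraic content of the hypotheses. Since $H^+(Y;\mathbb{Q})$ is a finite-dimensional, hence nilpotent, augmentation ideal and $H^+=H^+\cdot H^+\oplus V$, iterating shows $V$ generates $H$ and the map $\varphi\colon(\Lambda V,0)\to(H,0)$ is surjective. As $\varphi$ is injective on $V$ and $V\cap(H^+\cdot H^+)=0$, each of conditions (i)/(ii) forces the set $\{1\}\cup V'\cup E$ of images of ordered monomials in $V'$ to be linearly independent, and since these images also span, they form a basis of $H$, the remaining monomials mapping to $0$. Consequently $\ker\varphi$ is spanned by monomials; that is, $H\cong\Lambda V/I$ with $I$ a \emph{monomial ideal} and all generators of $V$ in even degree (in case (i), $I=\Lambda^{\ge 2}V$). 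This monomial description is what I will exploit.

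Next I would write down the bigraded (Tate) model $(\Lambda Z,d)$ of $(H,0)$, with $Z=\bigoplus_{s\ge 0}Z_s$ and $d(Z_s)\subseteq(\Lambda Z)_{s-1}$: by construction this is a minimal Sullivan algebra equipped with a quasi-isomorphism $(\Lambda Z,d)\to(H,0)$, hence the minimal model of the formal space with cohomology $H$. The monomial form of $I$ makes it explicit: $Z_0=V$ with $d=0$, a chosen basis of $Z_1$ maps under $d$ to the monomial generators of $I$, and the higher $Z_s$ resolve the (again monomial) syzygies. The crucial bookkeeping is a parity statement: since all generators and all monomial relations lie in even total degree, an easy induction gives that $Z_s$ is concentrated in total degree $\equiv s\pmod 2$, so each lower-degree piece $(\Lambda Z)_j$ lies in total degree $\equiv j\pmod 2$.

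Finally I would run the Halperin--Stasheff obstruction argument to upgrade this to intrinsic formality. Any Sullivan model of a cochain algebra with cohomology $H$ can be taken to be a filtered model $(\Lambda Z,D)$ with $D=d+\sum_{i\ge 1}D_i$ and $D_i(Z_s)\subseteq(\Lambda Z)_{s-1+i}$; the parity statement immediately kills $D_i$ for odd $i$, and formality amounts to gauging away the remaining even perturbations by an automorphism $e^{\theta}$. The successive obstructions are cohomology classes of the derivation complex $(\mathrm{Der}(\Lambda Z),[d,-])$ in total degree $+1$, and I would show they vanish by combining the two inputs the hypotheses provide: the associated Massey-type data would have to be detected in odd total degree, where $H=0$, and the monomial structure of $I$ (equivalently, the linear independence of $E$) makes the syzygy generators of $Z$ completely explicit, so that each obstruction cocycle is visibly a coboundary. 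The main obstacle is precisely this last step---verifying the vanishing of the obstruction classes in the derivation cohomology---because the parity constraints alone are consistent with nonzero obstructions; the linear independence in (ii), with its degenerate form (i), is exactly what rigidifies the bigraded resolution enough to force these obstructions to die.
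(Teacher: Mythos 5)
Your strategy is genuinely different from the paper's: you aim to show that $H^*(Y;\mathbb{Q})$ is \emph{intrinsically} formal via the bigraded model of $(H,0)$ and the Halperin--Stasheff filtered-model/obstruction calculus, whereas the paper argues geometrically --- it replaces $Y$ by a CW complex $X$ with trivial cellular differential (Theorem 9.11 of F\'elix--Halperin--Thomas), attaches for each minimal vanishing product of generators a pair of cells that leaves rational homology unchanged, and exhibits the cellular cochain algebra of the resulting complex $\widetilde{X}\simeq_{\mathbb{Q}}Y$ as a quotient $(\Lambda\widetilde{V}/I,d)$ that maps isomorphically onto its cohomology. Your preliminary reductions are correct: the reformulation of the hypotheses as ``$\ker\varphi$ is a monomial ideal on evenly graded generators'' is right (granting the reading of condition (ii) as linear independence of the \emph{family} of nonzero images of distinct monomials, an ambiguity the paper itself shares in its injectivity argument for $H(\widetilde{\varphi})$), and the parity statement that $Z_s$ is concentrated in total degree $\equiv s\pmod 2$ follows by the induction you indicate. (A small indexing slip: the filtered-model perturbation \emph{lowers} word-length, $D_i(Z_s)\subseteq(\Lambda Z)_{s-i}$ for $i\ge 2$, so parity kills the even-indexed $D_i$ and it is the odd-indexed ones, corresponding to even-length Massey-type products landing in \emph{even} degree, that survive; this does not change your point that parity alone is insufficient.)

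The genuine gap is the step you yourself flag as ``the main obstacle'': the vanishing of the surviving obstruction classes in the derivation cohomology of the bigraded model. Everything before that point is formal bookkeeping available for any algebra with $H^{\mathrm{odd}}=0$, and, as you note, such algebras need not be intrinsically formal; so the entire force of hypotheses (i)/(ii) must enter exactly here. Your justification --- that the monomial structure of $I$ makes the syzygy generators explicit ``so that each obstruction cocycle is visibly a coboundary'' --- is an assertion of the theorem, not a proof of it: nothing is said about why monomiality of $I$ produces a contracting homotopy on the relevant degree of $(\mathrm{Der}(\Lambda Z),[d,-])$, nor is the gauge automorphism $e^{\theta}$ constructed inductively, nor is the base case of the induction (killing $D_3$, i.e., the $4$-fold Massey products, which do land in even degree) treated. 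Until that computation is actually carried out --- or replaced by something like the paper's explicit construction, which sidesteps derivation cohomology entirely by building a model whose differential graded algebra of cellular cochains is manifestly quasi-isomorphic to its cohomology --- the proposal is a plausible plan rather than a proof.
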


As a special case of Theorem \ref{1.1}, we can obtain directly the following corollary.
\begin{Corollary}\label{1.2}
	With notations as above.
	Let $F=\{n_j\in\mathbb{Z}|~0<n_1<\cdots<n_j<\cdots,~ H^{n_j}(Y;\mathbb{Q})\neq 0\}$.
	If $H^{*}(Y;\mathbb{Q})$ vanishes in odd degrees and satisfies one of the following conditions:
	\begin{itemize}
		\item [(i)] F=$\emptyset$;
		\item [(ii)]
		  for any $n_k\in F$,
		$n_k\neq\sum\limits_{1\leq i<k}a_in_i,~a_i\in\mathbb{Z},$
	\end{itemize}
	then the rationalization of $Y$, $Y_\mathbb{Q}$, is formal.
\end{Corollary}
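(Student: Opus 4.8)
The plan is to derive Corollary~\ref{1.2} as a direct consequence of Theorem~\ref{1.1}, by verifying that in each case the hypotheses on $F$ force the cup product on $H^+(Y;\mathbb{Q})$ to be trivial, i.e.\ condition~(i) of Theorem~\ref{1.1}. Both statements already assume that $H^*(Y;\mathbb{Q})$ vanishes in odd degrees, so the whole task reduces to showing $E=\emptyset$ under each hypothesis of the corollary; once this is done, Theorem~\ref{1.1}(i) immediately yields that $Y_\mathbb{Q}$ is formal. For case (i), if $F=\emptyset$ then $H^n(Y;\mathbb{Q})=0$ for all $n>0$, so $H^+(Y;\mathbb{Q})=0$, whence $V=0$ and $E=\emptyset$ vacuously.

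For case (ii), the one real step is a degree count on a hypothetical nonzero product. Suppose, for contradiction, that $\varphi(v_{t_1}v_{t_2}\cdots v_{t_k})\neq 0$ for some $v_{t_j}\in V^{\prime}$ with $k\ge 2$, and set $m=\sum_{j=1}^{k}|v_{t_j}|$, the degree of this class. Each $|v_{t_j}|$ is a positive integer with $H^{|v_{t_j}|}(Y;\mathbb{Q})\neq 0$, hence $|v_{t_j}|\in F$; moreover, because $k\ge 2$ and every degree in $F$ is positive, each factor satisfies $|v_{t_j}|<m$. Since the product is nonzero we have $H^{m}(Y;\mathbb{Q})\neq 0$, so $m=n_l$ for some $n_l\in F$, and collecting equal factors gives a representation $n_l=\sum_{i<l}a_i n_i$ with $a_i\in\mathbb{Z}_{\ge 0}$ and $\sum_i a_i=k\ge 2$. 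This contradicts hypothesis (ii). Therefore no such product exists, $E=\emptyset$, and Theorem~\ref{1.1}(i) applies once more, giving formality of $Y_\mathbb{Q}$.

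The proof presents no serious obstacle, as expected for a statement the authors describe as obtained ``directly''. The only point needing care is the strict inequality $|v_{t_j}|<m$: it is what guarantees that each factor's degree lies among the \emph{strictly smaller} indices $n_i$ (with $i<l$) appearing in hypothesis (ii), and it relies on both $k\ge 2$ and the positivity of all degrees in $F$. Note also that the integers $a_i$ produced by this counting are nonnegative, hence in particular lie in $\mathbb{Z}$, so the contradiction is valid for hypothesis (ii) exactly as stated.
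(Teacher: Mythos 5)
Your proposal is correct and matches the paper's intent: the paper gives no written proof, describing Corollary~\ref{1.2} as a direct special case of Theorem~\ref{1.1}, and your reduction (showing the degree hypothesis on $F$ forces every product of at least two generators to vanish, so $E=\emptyset$ and Theorem~\ref{1.1}(i) applies) is exactly the intended argument. The degree count, including the strict inequality $|v_{t_j}|<m$ and the observation that the resulting coefficients are nonnegative integers and hence in $\mathbb{Z}$, is carried out correctly.
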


The paper is organized as follows. In Section \ref{22}, we recall some basic knowledge in rational homotopy theory. In Section \ref{33}, we prove Theorem \ref{1.1}.  In Section \ref{44}, an illustrative example of  formality of the  spatial realization of a Sullivan algebra is presented.

\section{Preliminaries}\label{22}

We assume that the reader is familiar with the basics of rational homotopy theory. Our general reference for this material is \cite{06bumu}.

Recall that a singular $n$-simplex in a space $X$ is a continuous map $$\sigma: \Delta^n\rightarrow X,$$
where $n\geq 0$ and $\Delta^n$ is the standard simplex which consists of the points $x=(t_0, \cdots, t_n)$ in $\mathbb{R}^{n+1}$ such that each $t_i\geq 0$ and $ \sum\limits_{i=0}^nt_i=1$. In particular, $e_i=(0,\cdots,t_i=1,\cdots,0)\in \Delta^n$.
If $X\subset \mathbb{R}^m$ is a convex subset then any sequence $x_0,\cdots,x_n\in{X}$ determines the linear simplex $$\langle x_0,\cdots,x_n\rangle: \Delta^n\rightarrow{X},~ \sum t_ie_i\longmapsto{\sum t_ix_i}.$$
There are  important linear simplices
$$\lambda_i=\langle e_0,\cdots,\hat{e}_i,\cdots,e_n\rangle: \Delta^{n-1}\rightarrow{\Delta^{n}},~ 0\leq{i}\leq{n},$$
where $\hat{e}_i$ means omitting.

Denote the cellular chain complex for a CW complex $(X, x_0)$ by $(C_{\ast}(X),\partial_*)$, and let $S_*(X)$ denote the singular chain complex of $X$. Then we have

\begin{Theorem}\label{2.1}\rm{\cite[Theorem 4.18]{06bumu}}
Every CW complex $(X,x_0)$ has a cellular chain model $$m: (C_{\ast}(X),\partial_*)\rightarrow{S_{\ast}(X)},$$  and $m$ is
 a quasi-isomorphism.

\end{Theorem}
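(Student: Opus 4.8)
The plan is to realize $C_*(X)$ as the associated graded of the skeletal filtration of $S_*(X)$ and to construct $m$ by induction up the skeleta, cell by cell. First I would recall that, by definition, $C_n(X)=H_n(X^{(n)},X^{(n-1)})$ is free abelian on the set of $n$-cells, with differential $\partial_*$ the composite of the connecting homomorphism $H_n(X^{(n)},X^{(n-1)})\to H_{n-1}(X^{(n-1)})$ with the projection $H_{n-1}(X^{(n-1)})\to H_{n-1}(X^{(n-1)},X^{(n-2)})$. The computational input I would establish at the outset is the standard vanishing statement: since $X^{(n)}/X^{(n-1)}$ is a wedge of $n$-spheres, the relative singular homology $H_k(X^{(n)},X^{(n-1)})$ vanishes for $k\neq n$ and is free on the $n$-cells for $k=n$; equivalently, the relative singular chain complex $S_*(X^{(n)},X^{(n-1)})$ has homology concentrated in degree $n$. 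The linear simplices $\lambda_i$ recalled above are precisely the combinatorial data I would use to subdivide the characteristic map $\Phi_\alpha\colon(D^n,S^{n-1})\to(X^{(n)},X^{(n-1)})$ of each cell into an explicit relative singular cycle $z_\alpha$.

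Second, I would build $m$ so that it is an honest chain map, which is the main obstacle. For each $n$-cell $c_\alpha$, regarded as a basis element of $C_n(X)=H_n(X^{(n)},X^{(n-1)})$, I would choose a relative singular cycle $z_\alpha\in S_n(X^{(n)})$ representing $c_\alpha$ and set $m(c_\alpha)=z_\alpha$. The constraint to enforce is $\partial z_\alpha=m(\partial_* c_\alpha)$ in $S_{n-1}(X^{(n-1)})$. Assuming $m$ is already defined and is a chain map on $C_{<n}$, the element $m(\partial_* c_\alpha)$ is a singular cycle representing the image of $\partial_* c_\alpha$ under $H_{n-1}(X^{(n-1)},X^{(n-2)})\leftarrow$ the connecting map applied to $c_\alpha$; by exactness of the long exact sequence of the pair, this class dies in $H_{n-1}(X^{(n)})$, so $m(\partial_* c_\alpha)$ bounds in $S_*(X^{(n)})$. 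Using the degree-$n$ acyclicity above I would then adjust the representative $z_\alpha$ by a chain so that its boundary equals $m(\partial_* c_\alpha)$ exactly, not merely up to a boundary. The delicate point, and the reason this step is the hard one, is that these choices of representing cycles and of bounding chains must be made coherently across all cells so that $\partial m=m\partial_*$ holds on the nose; an acyclic-models argument would be an alternative route to this coherence, but the explicit skeletal induction is more transparent.

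Third, I would verify that $m$ is a quasi-isomorphism by comparing skeletal filtrations. Restricting $m$ to $C_*(X^{(n)})$ produces a morphism between the long exact homology sequences of the pairs $(X^{(n)},X^{(n-1)})$; on the subquotient $C_n(X)=H_n(X^{(n)},X^{(n-1)})$ the induced map is the identity by construction, so the five lemma together with induction on $n$ shows that $H_*(C_*(X^{(n)}))\to H_*(X^{(n)})$ is an isomorphism for every $n$. Finally I would pass to the colimit: since each singular simplex has compact image and hence factors through some finite skeleton, one has $S_*(X)=\operatorname{colim}_n S_*(X^{(n)})$ and likewise $C_*(X)=\operatorname{colim}_n C_*(X^{(n)})$, and because homology commutes with such directed colimits, the isomorphisms on skeleta assemble to show that $m\colon(C_*(X),\partial_*)\to S_*(X)$ is a quasi-isomorphism on all of $X$.
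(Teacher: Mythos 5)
Your proposal cannot be compared with a proof in the paper, because the paper gives none: Theorem \ref{2.1} is quoted verbatim from \cite[Theorem 4.18]{06bumu}. Measured against the standard proof in that reference, your argument is essentially the same route: skeletal induction, the computation that $H_k(X^{(n)},X^{(n-1)})$ is concentrated in degree $n$ and free on the $n$-cells, the five lemma applied to the map of long exact sequences of pairs, and the compactness/colimit argument to pass from skeleta to $X$. Two small remarks. First, the ``delicate coherence'' you worry about in the second step is not actually an issue: $m$ is determined by its values on the basis of cells, and the chain-map condition $\partial m(c_\alpha)=m(\partial_* c_\alpha)$ is a separate condition for each cell, so the cells are handled independently once $m$ is fixed on $C_{<n}$. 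Second, the one point you should make explicit is the justification that $m(\partial_* c_\alpha)$ bounds in $S_*(X^{(n)})$: a priori $m(\partial_* c_\alpha)$ only represents $\partial_* c_\alpha$ in the \emph{relative} group $H_{n-1}(X^{(n-1)},X^{(n-2)})$, whereas the exactness argument requires its \emph{absolute} class in $H_{n-1}(X^{(n-1)})$ to equal $\delta(c_\alpha)$; the discrepancy lies in the image of $H_{n-1}(X^{(n-2)})$, which vanishes because the homology of a skeleton vanishes above its dimension. That vanishing ($H_k(X^{(m)})=0$ for $k>m$) should be carried along as part of your induction; with it in place, your adjustment of $z_\alpha$ by a chain in $S_n(X^{(n-1)})$ goes through exactly as you describe, and the rest of the proof is correct.
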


\begin{Theorem}\label{2.2}\rm{\cite[Theorem 9.11]{06bumu}} Every simply connected space $Y$ is rationally modelled by a CW complex $X$ for which the differential in the integral celluar chain complex is identically zero.
\end{Theorem}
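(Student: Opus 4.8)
The plan is to reduce the statement to the rational Whitehead theorem and then to realize $Y$ by a homology (Eckmann--Hilton dual Postnikov) decomposition carried out over $\mathbb{Q}$, where torsion disappears and every attaching map becomes homologically trivial. Since a map between simply connected spaces that induces an isomorphism on rational homology is a rational homotopy equivalence, it suffices to produce a simply connected CW complex $X$ whose integral cellular chain complex has vanishing differential, together with a map $f\colon X\to Y$ inducing an isomorphism $H_*(X;\mathbb{Q})\xrightarrow{\cong}H_*(Y;\mathbb{Q})$. The rationalization $X_\mathbb{Q}\simeq Y_\mathbb{Q}$ then follows automatically.

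First I would recall the homology decomposition of the simply connected space $Y$: there is a sequence $Y_2\subset Y_3\subset\cdots$ with $Y\simeq\varinjlim Y_n$, where each $Y_n$ is obtained from $Y_{n-1}$ by attaching a cone on a Moore space, $Y_n=Y_{n-1}\cup_{h_n}C\,M(H_n(Y),n-1)$, and the attaching map $h_n$ induces the zero map on reduced homology. This forces $H_i(Y_n)\cong H_i(Y)$ for $i\le n$, and the cells created in passing from $Y_{n-1}$ to $Y_n$ correspond exactly to a basis of $H_n(Y)$.

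The key point is that rationally $H_n(Y;\mathbb{Q})\cong\mathbb{Q}^{b_n}$ is torsion free, so the relevant Moore space is a wedge of spheres $\bigvee^{b_n}S^{n-1}$ rather than a space carrying torsion cells such as $S^{n-1}\cup_k e^{n}$. I would therefore build $X$ inductively as an honest integral CW complex: assuming $X_{n-1}$ with vanishing cellular differential and a rational homology isomorphism $f_{n-1}\colon X_{n-1}\to Y_{n-1}$ have been constructed, I choose an integral representative $\varphi_n\colon\bigvee^{b_n}S^{n-1}\to X_{n-1}$ of the attaching data, attach $b_n$ ordinary $n$-cells along $\varphi_n$, and extend $f_{n-1}$ over them. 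Because $X_{n-1}$ has free integral homology, the image of each attaching sphere is torsion and hence zero, so $\varphi_n$ is homologically trivial; since $X_{n-1}$ already has vanishing cellular boundary, the cellular boundary of each new $n$-cell coincides with this homology class and therefore vanishes, and $X_n$ again has zero integral cellular differential. Passing to the colimit produces $X$ with $(C_*(X),\partial_*)=(C_*(X),0)$, and $f=\varinjlim f_n$ is a rational homology isomorphism by construction.

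The main obstacle I expect lies in the inductive choice of $\varphi_n$ together with the extension of $f_{n-1}$ over the new cells. Concretely, $\varphi_n$ must be selected so that $f_{n-1}\circ\varphi_n$ is rationally homotopic to the attaching map $h_n$ of the homology decomposition of $Y$, so that $f$ extends and remains a rational homology isomorphism in the next degree; this lifting through the rational equivalence $f_{n-1}$ is what transports the higher attaching information (Whitehead and Massey products) that pins down the rational homotopy type, and it must be arranged simultaneously with homological triviality at every stage. Once this compatibility is established, the vanishing of the cellular differential is automatic from the torsion-freeness of the successive skeleta, and the rational Whitehead theorem upgrades the rational homology isomorphism $f$ to the desired rational equivalence $X_\mathbb{Q}\simeq Y_\mathbb{Q}$.
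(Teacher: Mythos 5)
This statement is quoted background: the paper gives no proof of Theorem~2.2, it simply cites \cite[Theorem 9.11]{06bumu}, so your proposal can only be measured against the standard argument in F\'elix--Halperin--Thomas. Your overall plan is viable: build $X$ cell by cell so that its $n$-cells realize a basis of $H_n(Y;\mathbb{Q})$, note that homologically trivial attaching maps force the cellular differential to vanish, and finish with the rational Whitehead theorem. In particular your argument that $\partial_*=0$ is exactly right: since each stage $X_{n-1}$ has free integral homology (zero differential so far), an attaching class that is rationally trivial is torsion and hence zero, and with vanishing lower differentials this zero class \emph{is} the cellular boundary of the new cell.

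The genuine gap is the extension step. You choose $\varphi_n$ so that $f_{n-1}\circ\varphi_n$ is \emph{rationally} homotopic to the attaching data $h_n$ (strictly, to $h_n\circ\mu$ where $\mu\colon\bigvee S^{n-1}\to M(H_n(Y),n-1)$ is a rational equivalence of the Moore space by a wedge of spheres), and you assert that this compatibility lets $f_{n-1}$ extend over the new $n$-cells. That implication fails as stated: extending a map over a cell attached along $\varphi_n$ requires $f_{n-1}\circ\varphi_n$ to be \emph{genuinely} null-homotopic in $Y_n$, whereas rational compatibility only says its class in $\pi_{n-1}(Y_n)$ is torsion; moreover, the lift $\varphi_n$ through $f_{n-1}$ exists only after clearing denominators, since $\pi_{n-1}(f_{n-1})\otimes\mathbb{Q}$ being an isomorphism produces an integral preimage only of some nonzero multiple of $[h_n\circ\mu]$. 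Both defects are repaired by the same device: replace $\varphi_n$ by $N\varphi_n$ for a suitable integer $N\neq 0$, chosen so that $f_{n-1}\circ\varphi_n$ is an \emph{exact} integer multiple of $h_n\circ\mu$ in $\pi_{n-1}(Y_{n-1})$; since $h_n\circ\mu$ cones off in $Y_n$, so does any integer multiple, and the extension exists. Scaling by $N$ changes nothing rationally, so the five-lemma argument on the pairs $(X_n,X_{n-1})\to(Y_n,Y_{n-1})$ still shows $H_n(f_n;\mathbb{Q})$ is an isomorphism. Alternatively one can map into rationalized targets (ultimately $Y_{\mathbb{Q}}$), where homotopy groups are $\mathbb{Q}$-vector spaces and torsion obstructions vanish, with the cells produced by the relative Hurewicz theorem modulo torsion; this is the route of the cited reference and avoids the bookkeeping with multiples entirely. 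With either repair your construction goes through.
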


\begin{Theorem}\label{2.3} \rm{\cite[Propositon 9.8]{06bumu}} Simply connected spaces $X$ and $Y$ have the same rational homotopy type if and only if there is a chain of rational homotopy equivalences
	$$X\leftarrow{\cdots}\rightarrow{\cdots}\leftarrow{\cdots}\rightarrow{Y}.$$
	In particular, if $X$ and $Y$ are CW complexes, then they have the same rational homotopy type if and only if $X_{\mathbb{Q}}\simeq{Y_{\mathbb{Q}}}$.

\end{Theorem}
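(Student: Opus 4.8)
The plan is to prove the characterization by means of the rationalization functor $X\mapsto X_{\mathbb{Q}}$ on simply connected spaces, reducing both equivalences to a direct comparison of rationalizations. I read ``same rational homotopy type'' as the relation generated by rational homotopy equivalences, that is, maps $f$ inducing an isomorphism on $H_{*}(-;\mathbb{Q})$, so that the content of the statement is to convert a chain of such maps into a single homotopy equivalence $X_{\mathbb{Q}}\simeq Y_{\mathbb{Q}}$ and conversely. Two facts about rationalization I take as standard, being part of the localization theory of \cite{06bumu}: the localization map $\ell_{X}\colon X\to X_{\mathbb{Q}}$ is a rational homotopy equivalence with $X_{\mathbb{Q}}$ a rational CW complex, and rationalization is a homotopy functor, natural in the sense that $\ell_{Y}\circ f\simeq f_{\mathbb{Q}}\circ\ell_{X}$ for every $f\colon X\to Y$. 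The engine of the argument is the lemma that, for simply connected spaces, $f$ is a rational homotopy equivalence if and only if $f_{\mathbb{Q}}\colon X_{\mathbb{Q}}\to Y_{\mathbb{Q}}$ is a homotopy equivalence, which I single out as the step to be established.

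Granting the lemma, the direction building a chain from an equivalence of rationalizations is immediate. Given a homotopy equivalence $h\colon X_{\mathbb{Q}}\xrightarrow{\simeq}Y_{\mathbb{Q}}$, the diagram
$$X\xrightarrow{\ell_{X}}X_{\mathbb{Q}}\xrightarrow{h}Y_{\mathbb{Q}}\xleftarrow{\ell_{Y}}Y$$
is a chain of rational homotopy equivalences, since $\ell_{X}$ and $\ell_{Y}$ are rational equivalences and $h$ is one a fortiori. This exhibits the required zig-zag.

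For the converse I would take a chain $X=Z_{0}\leftarrow Z_{1}\rightarrow\cdots\rightarrow Z_{n}=Y$ of rational homotopy equivalences of simply connected spaces and apply rationalization termwise. By functoriality this yields a chain $X_{\mathbb{Q}}\leftarrow(Z_{1})_{\mathbb{Q}}\rightarrow\cdots\rightarrow Y_{\mathbb{Q}}$, and by the lemma every arrow in it is a homotopy equivalence; inverting the backward arrows and composing produces a single homotopy equivalence $X_{\mathbb{Q}}\simeq Y_{\mathbb{Q}}$. The ``in particular'' clause for CW complexes follows at once, because for CW complexes weak and homotopy equivalence coincide by Whitehead's theorem, so ``same rational homotopy type'' is exactly the assertion $X_{\mathbb{Q}}\simeq Y_{\mathbb{Q}}$.

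The main obstacle is the lemma, especially its forward implication. Using the naturality relation $\ell_{Y}\circ f\simeq f_{\mathbb{Q}}\circ\ell_{X}$ and the fact that $\ell_{X},\ell_{Y}$ are rational-homology isomorphisms, one transfers the hypothesis that $H_{*}(f;\mathbb{Q})$ is an isomorphism to the statement that $f_{\mathbb{Q}}$ is a rational-homology isomorphism; since the rationalizations are simply connected rational spaces, their integral and rational homologies coincide, so $f_{\mathbb{Q}}$ is an integral-homology isomorphism, whence the homology Whitehead theorem makes it a weak equivalence and, the rationalizations having CW homotopy type, a homotopy equivalence. This is precisely where simple connectivity is indispensable. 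In the Sullivan framework of the present paper the same result may be reached dually, by applying $A_{PL}$ to turn a chain of rational equivalences into a chain of quasi-isomorphisms of commutative cochain algebras and identifying $X_{\mathbb{Q}}$ with the spatial realization of the minimal model of $A_{PL}(X)$; there the obstacle reappears as the compatibility of spatial realization with rationalization.
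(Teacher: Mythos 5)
The paper offers no proof of this statement: it is quoted directly from \cite[Proposition 9.8]{06bumu} as background, so there is no in-paper argument to compare yours against. Your proof is correct and is essentially the standard argument behind that reference: termwise rationalization of the chain using the homotopy functoriality of localization, together with the key lemma that a map of simply connected spaces is a rational homotopy equivalence exactly when its rationalization is a (weak) homotopy equivalence, which you correctly reduce to the homology Whitehead theorem via the fact that integral and rational homology agree on simply connected rational spaces. The only wrinkle is definitional: you read ``same rational homotopy type'' as the equivalence relation generated by rational homotopy equivalences, which makes the first ``if and only if'' of the statement a tautology; in \cite{06bumu} the rational homotopy type of $X$ is instead defined as the weak homotopy type of $X_{\mathbb{Q}}$, so the substantive content is exactly the equivalence you do prove, namely that a chain exists iff $X_{\mathbb{Q}}$ and $Y_{\mathbb{Q}}$ are (weakly, hence for CW complexes genuinely) homotopy equivalent. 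Either reading is covered by your argument, so the proof stands.
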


Recall here that for a simplicial set $K$, the cochain algebra $C^*(K)$ is defined as follows \rm{\cite[Chapter 10(d)]{06bumu}},

(1) $C^*(K)=\{C^p(K)\}_{p\geq{0}}$;

(2) $C^p(K)$ consists of the set maps $K_p\rightarrow \mathbb{K}$ vanishing on degenerate simplices;

(3) For $f\in{C^p(K)},~h\in{C^q(K)}$, the product is given by
$$(f\cup h)(\tau)=(-1)^{pq}f(\partial_{p+1}\cdots\partial_{p+q}\tau)\cdot h(\partial_{0}\cdots\partial_{0}\tau),~\tau\in{K_{p+q}}$$

(4) The differential $d$ is given by
$$d(f)(\tau)=\sum\limits_{i=0}^{p+1}(-1)^{p+i+1}f(\partial_i\tau), ~f\in C^p(K), ~\tau\in K_{p+1}.$$

\begin{Remark}
If $K$ is a simplicial vector space, set maps in (2) mean linear maps. For $k\in K_p$, we set $$K_p=\mathbb{Q}k\oplus B.$$
Denote by
$k^*\in C^p(K)$  a linear map $$k^*:K_p\rightarrow \mathbb{K}$$ such that $k^*(k)=1$
and $k^*(s)=0$ if $s\in B$.
In this paper, there are two important cases we concern: $K=S_*(X)$ and $K=(C_*(X), \partial_*)$.

\end{Remark}
\begin{Theorem}\label{2.5}\rm{\cite[Theorem 10.9]{06bumu}}
	Let $K$ be a simplicial set. Then

(1) There is a natural isomorphism of cochain algebras $C_{PL}(K)\rightarrow C^*(K)$.

(2) The natural morphisms of cochain algebras,
$$C_{PL}(K)\rightarrow(C_{PL}\otimes A_{PL})(K)\leftarrow A_{PL}(K)$$
are quasi-isomorphisms.
\end{Theorem}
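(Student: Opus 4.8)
The plan is to treat the two assertions separately, both as instances of the general machinery of simplicial cochain algebras and their realizations. Recall that a simplicial cochain algebra is a simplicial object $A=\{A_n\}$ in cochain algebras, with face operators $\partial_i\colon A_n\to A_{n-1}$ and degeneracies $s_j\colon A_n\to A_{n+1}$, and that its realization on a simplicial set $K$ is the cochain algebra
$$A(K)=\{(\Phi_\sigma)\mid \sigma\in K_n,\ \Phi_\sigma\in A_n,\ \partial_i\Phi_\sigma=\Phi_{\partial_i\sigma},\ s_j\Phi_\sigma=\Phi_{s_j\sigma}\},$$
equipped with the pointwise differential and product. Here $A_{PL}$ is the commutative simplicial cochain algebra of polynomial forms with $(A_{PL})_n=A_{PL}(\Delta^n)$, while $C_{PL}$ is the simplicial cochain algebra with $(C_{PL})_n=C^*(\Delta[n])$, the normalized cochains on the standard $n$-simplex. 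The whole argument rests on the \emph{extendability} of these functors: a simplicial cochain algebra $A$ is extendable when every compatible family of cochains prescribed on the faces of $\Delta[n]$ extends to a cochain on $\Delta[n]$.

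For assertion (1), I would construct the natural morphism $C_{PL}(K)\to C^*(K)$ by evaluation on fundamental simplices. Given $\Phi\in C_{PL}(K)$ of degree $p$ and a simplex $\sigma\in K_p$, the element $\Phi_\sigma\in C^p(\Delta[p])$ pairs with the fundamental chain $\iota_p\in N_p(\Delta[p])$ given by the identity $[p]\to[p]$, and I set $f_\Phi(\sigma)=\Phi_\sigma(\iota_p)$. Compatibility of the family $\Phi$ with faces and degeneracies shows at once that $f_\Phi$ is a normalized cochain and that $\Phi\mapsto f_\Phi$ commutes with the differential, while matching the products reduces to the explicit cup-product formula for $C^*(K)$ recalled above. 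Bijectivity follows because a cochain in $C^*(\Delta[n])$ is determined by, and may be freely prescribed by, its values on the injective order-preserving maps $[p]\to[n]$, that is, on the non-degenerate simplices of $\Delta[n]$; concretely, the inverse sends $f\in C^p(K)$ to the family $\Phi_\sigma(\alpha)=f(\sigma\circ\alpha)$.

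For assertion (2), the strategy is to reduce both comparison morphisms to a single general principle: if $\psi\colon A\to B$ is a morphism of extendable simplicial cochain algebras for which each $\psi_n\colon A_n\to B_n$ is a quasi-isomorphism, then $\psi(K)\colon A(K)\to B(K)$ is a quasi-isomorphism for every simplicial set $K$. I would apply this to the two inclusions $C_{PL}\to C_{PL}\otimes A_{PL}$, $a\mapsto a\otimes 1$, and $A_{PL}\to C_{PL}\otimes A_{PL}$, $b\mapsto 1\otimes b$. Extendability of $C_{PL}$ and $A_{PL}$ is built into their construction, and extendability is preserved under tensor products, so $C_{PL}\otimes A_{PL}$ is extendable as well. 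The levelwise quasi-isomorphism hypothesis holds because $\Delta^n$ is contractible: both $C^*(\Delta[n])$ and $A_{PL}(\Delta^n)$ have cohomology $\mathbb{K}$ concentrated in degree $0$, so the Künneth theorem identifies the cohomology of $C^*(\Delta[n])\otimes A_{PL}(\Delta^n)$ with $\mathbb{K}$ and shows that each inclusion induces an isomorphism on cohomology levelwise.

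The main obstacle is the general principle invoked above, and this is where the real work lies. I would establish it by filtering $K$ through its skeleta $K^{(0)}\subset K^{(1)}\subset\cdots$, so that $A(K)=\varprojlim_n A(K^{(n)})$. Extendability supplies, for each $n$, a short exact sequence of cochain complexes relating $A(K^{(n)})$, $A(K^{(n-1)})$, and a relative term assembled from $A$ evaluated on the standard $n$-simplices indexing the non-degenerate $n$-cells and on their boundaries; surjectivity here is exactly what extendability guarantees. Because $\psi$ is a levelwise quasi-isomorphism, the Künneth formula shows it induces an isomorphism on the cohomology of these relative terms, and the five lemma together with induction on $n$ shows that $\psi(K^{(n)})$ is a quasi-isomorphism for every $n$. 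Passing to the inverse limit then requires controlling the derived functor $\varprojlim^{1}$, which vanishes because extendability makes the defining tower surjective, so the Milnor sequence collapses and the comparison is a quasi-isomorphism. The delicate points throughout are the exactness of the skeletal sequences, where extendability is indispensable, and the careful bookkeeping of signs and normalizations in identifying the relative terms.
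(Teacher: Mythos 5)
The paper gives no proof of this statement at all---it is quoted verbatim, with citation, from F\'elix--Halperin--Thomas \cite[Theorem 10.9]{06bumu}---so the only meaningful comparison is with that source's argument, which your sketch reconstructs faithfully: part (1) by evaluation on fundamental simplices with the explicit inverse $\Phi_\sigma(\alpha)=f(\sigma\circ\alpha)$, and part (2) by the key lemma that a levelwise quasi-isomorphism of extendable simplicial cochain algebras induces quasi-isomorphisms on all $A(K)$, verified for the two inclusions via the Poincar\'e lemma for polynomial forms, contractibility of $\Delta[n]$, and K\"unneth, and proved by skeletal induction plus an inverse-limit argument. Two soft spots, neither fatal: you assert rather than prove that extendability is preserved by tensor products (this is true over a field---e.g.\ via Eilenberg--Zilber and K\"unneth applied to the normalized complexes of the simplicial vector spaces $A^p$---but it is a genuine lemma, handled separately in the cited book), and in the limiting step the surjectivity of the cochain-level tower supplied by extendability gives the \emph{existence} of the Milnor sequence rather than the vanishing of $\varprojlim^1$ of cohomology, so one should finish by comparing the two Milnor sequences for $A$ and $B$ with the five lemma (or by noting the cohomology towers stabilize in each degree).
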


\section{main results}\label{33}

Recall that via fixed canonical maps the sphere $S^{n-1}$ and the disk $D^n$ are always identified with $\partial{\Delta^n}$ and $\Delta^n$, respectively. Thus, attaching maps and characteristic maps for $n$-cells of a CW complex $X$, ${f}: S^{n-1}\rightarrow X$,
 $\tilde{f}: D^n\rightarrow X$, can be regarded as singular simplices ${f}: \partial\Delta^n \rightarrow X$ and
  $\tilde{f}: \Delta^n \rightarrow X,$ respectively.

From now on, we assume that the space $X$ is a simply connected CW complex with
a nondegenerate
basepoint, and satisfies the following conditions:

(1) $H^*(X;\mathbb{Q})$ is finite dimensional and vanishes in odd degrees;

(2) The differential $\partial_{\ast}$ in the cellular chain complex $(C_{\ast}(X), \partial_{\ast})$ is zero.
\\
Let $V\subset H^{+}(X;\mathbb{Q})$ be a graded vector space such that
$$H^+(X;\mathbb{Q})=H^+(X;\mathbb{Q})\cdot{H^+(X;\mathbb{Q})}\oplus{V}.$$  The inclusion $V\hookrightarrow {H^+(X;\mathbb{Q})}$ extends to a morphism $\varphi:(\Lambda{V},0)\rightarrow({H^*(X;\mathbb{Q}),0)}$ of cochain algebras. Let $V^{\prime}=\{v_j|~j\in A\}$ be the set of generators of $V$, where $A$ is a totally ordered set.

\begin{Lemma}\label{3.1} With notations as above. Let $E=\{\varphi(v_{t_1}v_{t_2}\cdots v_{t_k})\neq{0}|~v_
	{t_j}\in V^{\prime},~t_j\in A,~t_1\leq t_2\leq\cdots\leq t_k,~k\geq 2,~k\in{\mathbb{Z}}\}$.
If $H^*(X;\mathbb{Q})$ satisfies one of the following conditions:
	\begin{itemize}
		\item[(i)]
		 $ E =\emptyset$;
		 \item[(ii)]
		 $E$ is linearly independent,
	\end{itemize}
	then there exist a cochain algebra $(\Lambda\widetilde{V},d)$ and a CW complex $\widetilde{X}$ which satisfy:
	
(1) $(\Lambda{V},0)$ is a sub cochain algebra of $(\Lambda\widetilde{V},d)$, and $(\widetilde{X},X)$ is a relative CW complex.
		
(2) $\varphi$ can extend to a quasi-isomorphism $\widetilde{\varphi}:(\Lambda\widetilde{V},d)\rightarrow ({H^*(\widetilde{X};\mathbb{Q})}, 0)$.
		
(3) The inclusion $i:X\rightarrow{\widetilde{X}}$ is a rational homotopy equivalence.
\end{Lemma}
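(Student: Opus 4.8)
The plan is to build the algebra $(\Lambda\widetilde V,d)$ and the space $\widetilde X$ in parallel, using $\ker\varphi$ as the common bookkeeping device, and then to match the two constructions through the cochain functor $C^*(-)$ and the comparison quasi-isomorphisms of Section~\ref{22}. First I would describe $\ker\varphi$ explicitly. Since $\varphi$ is the identity on $\mathbb{Q}\oplus V$ and $V$ lies in even degrees, $\Lambda V$ is polynomial and $\ker\varphi\subseteq\Lambda^{\ge 2}V$ is concentrated in even degrees. The monomials $v_{t_1}\cdots v_{t_k}$ with $k\ge 2$ span the decomposables $H^+(X;\mathbb{Q})\cdot H^+(X;\mathbb{Q})$, and $E$ is exactly the set of their nonzero images. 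Under hypothesis (i) we have $H^+\cdot H^+=0$, so $\ker\varphi=\Lambda^{\ge 2}V$; under hypothesis (ii) the set $E$ is a basis of $H^+\cdot H^+$, and then $\ker\varphi$ is spanned by the monomials mapping to $0$ together with the differences $m-m'$ of monomials with $\varphi(m)=\varphi(m')$. In either case I obtain a homogeneous, even-degree basis $\{r_\alpha\}$ of $\ker\varphi$.

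For the algebraic step I adjoin to $(\Lambda V,0)$ a generator $w_\alpha$ of odd degree $\abs{r_\alpha}-1$ for each $r_\alpha$, set $dw_\alpha=r_\alpha$, extend $d$ as a derivation, and declare $\widetilde\varphi(w_\alpha)=0$. This yields a relative Sullivan algebra $(\Lambda V,0)\hookrightarrow(\Lambda\widetilde V,d)$ with $\widetilde V=V\oplus W$ and an extension $\widetilde\varphi$ of $\varphi$ onto $(H^*(X;\mathbb{Q}),0)$. One then checks that $\widetilde\varphi$ is a quasi-isomorphism: the classes $r_\alpha$ become exact, so $\ker\varphi$ dies in cohomology, and the remaining task is to show that the products $w_\alpha v$ and $w_\alpha w_\beta$ contribute nothing new. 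This is precisely where (i) and (ii) are used: the vanishing of all products, respectively the linear independence of $E$, prevents distinct relations from combining into spurious cocycles, so that, after absorbing the resulting higher syzygies by the same procedure where necessary, $H(\Lambda\widetilde V,d)\cong H^*(X;\mathbb{Q})$.

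For the geometric step I would realize the same data as cells. Because $\partial_*=0$, every cell of $X$ is a cycle and, by the remark in Section~\ref{22} together with Theorems~\ref{2.1} and~\ref{2.5}, the cellular cochain algebra $C^*(C_*(X))$ computes $H^*(X;\mathbb{Q})$, so each $r_\alpha$ is a prescribed relation among cochains dual to cells. I attach to $X$, along maps realizing the $r_\alpha$, the cells dual to the $w_\alpha$, obtaining a relative CW complex $(\widetilde X,X)$, and I compare $C^*(C_*(\widetilde X))$ with $(\Lambda\widetilde V,d)$ using the cellular chain model $m$ of Theorem~\ref{2.1} and the quasi-isomorphisms $C_{PL}\to(C_{PL}\otimes A_{PL})\leftarrow A_{PL}$ of Theorem~\ref{2.5}, which let me pass between $A_{PL}(\widetilde X)$ and the cellular models. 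This identification upgrades $\widetilde\varphi$ to the quasi-isomorphism $\widetilde\varphi:(\Lambda\widetilde V,d)\to(H^*(\widetilde X;\mathbb{Q}),0)$ of conclusion~(2). Since $H(\Lambda\widetilde V,d)\cong H^*(X;\mathbb{Q})$, the inclusion induces an isomorphism $H^*(\widetilde X;\mathbb{Q})\cong H^*(X;\mathbb{Q})$, and Theorem~\ref{2.3} then shows $i$ is a rational homotopy equivalence, giving conclusions (1) and (3).

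The main obstacle is the quasi-isomorphism assertion: after adjoining the $w_\alpha$, their products with $V$ and with one another generate higher syzygies, and one must verify that these are all coboundaries so that no cohomology beyond $H^*(X;\mathbb{Q})$ survives. Conditions (i) and (ii) are exactly what make this controllable. The second, more technical difficulty is the bookkeeping of the realization, namely choosing the attaching maps of the new cells so that $(\widetilde X,X)$ genuinely realizes $(\Lambda\widetilde V,d)$ at the cochain level while keeping $i$ a rational equivalence.
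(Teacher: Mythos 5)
Your high-level strategy (adjoin odd generators killing the relations in $\ker\varphi$, attach corresponding cells, compare through the cellular cochain algebra) is the same as the paper's, but two essential points are left unresolved, and one of them is handled in the paper by a construction quite different from what you describe. First, the quasi-isomorphism claim for $\widetilde\varphi$. You adjoin a $w_\alpha$ for every basis element of $\ker\varphi$ and then propose to ``absorb the resulting higher syzygies by the same procedure where necessary.'' That is the whole difficulty, not a footnote: iterating the procedure produces the bigraded model and an unbounded family of new generators (hence of new cells), and nothing in your outline shows that iteration is unnecessary or that it terminates. The paper instead adjoins generators exactly once, and only for the \emph{minimal} monomial relations (its ``good objects'': monomials $v_{i_1}\cdots v_{i_l}$ with $\varphi(v_{i_1}\cdots v_{i_l})=0$ but $\varphi\neq 0$ on every proper sub-product), and then uses hypotheses (i)/(ii) directly to check that $H(\widetilde{\varphi})$ is injective. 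You identify this as the main obstacle but never carry out the step where (i)/(ii) actually enter.

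Second, and more seriously, the geometric step as you describe it would break conclusion (3). Attaching one cell per $w_\alpha$ ``along maps realizing the $r_\alpha$'' adds a cell in degree $|r_\alpha|-1$ and will in general change $H^*(-;\mathbb{Q})$, so the inclusion $i$ would not be a rational homotopy equivalence; your closing assertion that $H^*(\widetilde{X};\mathbb{Q})\cong H^*(X;\mathbb{Q})$ does not follow from anything you have established. The paper's device is different in kind: for each good object of degree $k$ it attaches a \emph{pair} of cells, a $(k-1)$-cell $\sigma$ attached by the constant map and a $k$-cell $\tau$ whose attaching map is arranged so that $\widetilde{\partial}_*\tau$ is a nonzero multiple of $\sigma$. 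The pair cancels in rational homology, so $H_*(\widetilde{X};\mathbb{Q})\cong H_*(X;\mathbb{Q})$ and $i$ is automatically a rational equivalence, while the new cells serve purely as cochain-level bookkeeping: in the subsequent lemma $\sigma^*$ realizes $w$ and $\widetilde{\partial}^*\sigma^*$ realizes the product relation. Without this cancelling-pair construction (or an equivalent device) your outline cannot deliver conclusions (2) and (3) simultaneously.
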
	
	
\begin{proof}
 Apply the universal coefficient theorem for cohomology,
$$0\longrightarrow{{\rm Ext}_\mathbb{Q}^1(H_*(X;\mathbb{Q}),\mathbb{Q})}\longrightarrow{H^*(X;\mathbb{Q})}\longrightarrow{{\rm Hom}_\mathbb{Q}(H_*(X;\mathbb{Q}),\mathbb{Q})}\longrightarrow{0},$$
we have  	
$H^*(X;\mathbb{Q})\cong{(H_*(X;\mathbb{Q}))^*}$.

Note that the differential $\partial_*$ in the cellular chain complex of $X$ is trivial. Then we have $H_*(X;\mathbb{Q})\cong{(C_*(X)\otimes{\mathbb{Q}},\partial_*\otimes1=0)}$. It follows that
\begin{equation}\label{1}
H^*({X};\mathbb{Q})\cong{(H_*({X};\mathbb{Q}))^*}\cong{(C_*({X})}\otimes{\mathbb{Q})^*}.
\end{equation}
For $(C_*(X),\partial_*)$, by the universal coefficient theorem for cohomology again we can get $$H^*(C^*(C_*({X})\otimes{\mathbb{Q}}))\cong (H_*(C_*({X})\otimes{\mathbb{Q}))^*}=(C_*({X})\otimes\mathbb{Q})^*.$$

Let $V$ be a vector space satisfying
$$H^+(X;\mathbb{Q})=H^+(X;\mathbb{Q})\cdot{H^+(X;\mathbb{Q})}\oplus{V}.$$ The inclusion $V\rightarrow{H^+(X;\mathbb{Q})}$ extends to a morphism $\varphi:(\Lambda{V},0)\rightarrow ({H^*(X;\mathbb{Q})}, 0)$ of cochain algebras. It is obvious that
$H(\varphi;\mathbb{Q})$ is surjective.

We call an element ${v_{i_1}\cdots{v_{i_l}}}\in{\Lambda^{l\geq{2}}{V}}$ a good object if it satisfies the
following conditions:
\begin{itemize}
	\item [(a)] ${v_{i_1}\cdots{v_{i_l}}}\not=0,~i_1\leq\cdots\leq i_l,~\varphi(v_{i_1}\cdots{v_{i_l}})=\varphi(v_{i_1})\cup\cdots\cup\varphi{(v_{i_l})}=0$;
	\item [(b)] for any factor $v_{i_t}\cdots{v_{i_d}}, ~i_1\leq i_t\leq\cdots\leq i_d\leq i_l,~0<d-t<l-1,\\~\varphi(v_{i_t}\cdots{v_{i_d}})\neq{0}$.
\end{itemize}
For each good object $v_{i_1}\cdots{v_{i_l}}$, we add $w_{v_{i_1}\cdots{v_{i_l}}}$ to $V$ and  define the differential $dw_{v_{i_1}\cdots{v_{i_l}}}=v_{i_1}\cdots{v_{i_l}}$. Denote by $(\Lambda\widetilde{V},d)$ the resulting cochain algebra.

For each good object $v_{i_1}\cdots{v_{i_l}}$, set ${\rm deg} (v_{i_1}\cdots{v_{i_l}})=k$. Then we will attach a pair of cells to $X$: a $(k-1)$-cell $\sigma_{v_{i_1}\cdots{v_{i_l}}}$, a $k$-cell $\tau_{v_{i_1}\cdots{v_{i_l}}}$.

Attach the $(k-1)$-cell $\sigma_{v_{i_1}\cdots{v_{i_l}}}$ by the constant map $f_{k-1}:(S^{k-2},s_0)\rightarrow{(x_0,x_0)}$. Denote by $\tilde{f}_{k-1}$ the characteristic map for it, and denote by $\overline{X}$ the temporary resulting space.

Now we construct a attaching map for the $k$-cell $\tau_{v_{i_1}\cdots{v_{i_l}}}$ as follows.
We first choose a map $r: (S^{k-1},s_0)\rightarrow{(S^{k-1},s_0)}$ such that  $H_{k-1}(r)[S^{k-1}]=5[S^{k-1}]\in{H_{k-1}(S^{k-1},s_0)}$, where $[S^{k-1}]$ is the fundamental class of $S^{k-1}$. Then we have the following composite map  $$\Delta^{k-1}\xrightarrow{{\rm projection}}{\Delta^{k-1}/\partial(\Delta^{k-1})}\xrightarrow{\cong}{S^{k-1}}\xrightarrow{r}{S^{k-1}},$$
denoted by $\tilde{r}$.

We now define a map $$g:\partial(\Delta^{k})\rightarrow{{\rm Im}{(\tilde{f}_{k-1}})}$$ as follows.
If $x\in \lambda_i(\Delta^{k-1})=\langle e_0,\cdots,\hat{e}_i,\cdots,e_{k}\rangle,~1\leq{i}\leq{k}$, we let $g(x)=\tilde{r}(x)$. Here $\langle e_0,\cdots,\hat{e}_i,\cdots,e_{k}\rangle$ is identified with $\langle e_0,\cdots,e_{k-1}\rangle$ under the canonical map $e_j\mapsto{e_{j}}$ if $j<{i}$ and $e_j\mapsto{e_{j-1}}$ if $j>i$.
If $x\in \langle e_1,\cdots, e_{k}\rangle$, we let $g(x)=\tilde{f}_{k-1}(x)$. $g$ is well-defined, since
$\tilde{r}(\partial(\Delta^{k-1}))=\tilde{f}_{k-1}(\partial(\Delta^{k-1}))=x_0.$

Attach a new cell $\tau_{v_{i_1}\cdots{v_{i_l}}}$ to $\overline{X}$ with $g$ as an attaching map. The resulting space denoted by $\widetilde{X}$ is our desired CW complex. Denote the differential in its cellular chain complex by $\widetilde\partial_*$.
Note that cells $\sigma_{v_{i_1}\cdots{v_{i_l}}}$ and $\tau_{v_{i_1}\cdots{v_{i_l}}}$  can be regarded as elements in $C_{k-1}(\widetilde{X})$ and $C_k(\widetilde{X})$, respectively.
It is easy to get that $$\widetilde\partial_*(\tau_{v_{i_1}\cdots{v_{i_l}}})=c_{v_{i_1}\cdots{v_{i_l}}}\sigma_{v_{i_1}\cdots{v_{i_l}}},$$ where $c_{v_{i_1}\cdots{v_{i_l}}}$ is a non-zero constant.
Then we have,
 $$\widetilde\partial^*\sigma^*_{v_{i_1}\cdots{v_{i_l}}}=(-1)^{k-1}c_{v_{i_1}\cdots{v_{i_l}}}\tau^*_{v_{i_1}\cdots{v_{i_l}}}.$$

Now define $\widetilde{\varphi}:(\Lambda\widetilde{V},d)\rightarrow{(H^*(\widetilde{X};\mathbb{Q})},0)$ as follows. The following linear map of degree zero from $\widetilde{V}$ to $H^*(\widetilde{X};\mathbb{Q})$
\begin{equation}\nonumber
\begin{cases}

\widetilde{\varphi}(v)=\varphi(v), &{\rm if}\ \ v\in{V};\\

\widetilde{\varphi}(w_{v_{i_1}\cdots{v_{i_l}}})=0,
\end{cases}
\end{equation}
 extends to a unique morphism of graded algebras,
$$\Lambda\widetilde{V}\rightarrow{H^*(\widetilde{X};\mathbb{Q})},$$ denoted by $\widetilde{\varphi}$. Note that $\widetilde{\varphi}(d(v_{i_1}\cdots{v_{i_l}}))=0=d(\widetilde{\varphi}(v_{i_1}\cdots{v_{i_l}}))$ and $\widetilde{\varphi}(dw_{v_{i_1}\cdots{v_{i_l}}})=\widetilde{\varphi}(v_{i_1}\cdots{v_{i_l}})=0=d(\widetilde{\varphi}(w))$. It follows that $\widetilde{\varphi}$ is actually the desired morphism of cochain algebras from $(\Lambda\widetilde{V},d)$ to ${(H^*(\widetilde{X};\mathbb{Q})},0)$.

Finally, we show that $\widetilde{\varphi}:(\Lambda\widetilde{V},d)\rightarrow{(H^*(\widetilde{X};\mathbb{Q}),0)}$ is a quasi-isomorphism and the inclusion $i:X\rightarrow{\widetilde{X}}$ is a rational homotopy equivalence.
$H(\widetilde\varphi)$ is obviously surjective. If $\sum{a_{t_1,\cdots,t_r}v_{t_1}\cdots{v_{t_r}}}\neq 0\in H(\Lambda\widetilde{V},d)$, where
$a_{t_1,\cdots,t_r}\neq 0\in{\mathbb{Q}}$ and $v_{t_1}\cdots{v_{t_r}}\neq 0\in  H(\Lambda\widetilde{V},d)$, combining with condition $(i)$ or $(ii)$, we have
$$H(\widetilde\varphi)(\sum{a_{t_1,\cdots,t_r}v_{t_1}\cdots{v_{t_r}}})=\sum{a_{t_1,\cdots,t_r}\varphi(v_{t_1}\cdots{v_{t_r}}})\neq 0,$$
that is, $H(\widetilde\varphi)$ is injective.
Hence we complete the proof of the isomorphism of $H(\widetilde{\varphi})$.
Note that in the construction of $\widetilde{X}$, we do not change the cellular homology of $X$.
Using the same argument in the beginning of the proof and the structure of $\widetilde{X}$, we can obtain that
\begin{equation}\label{2}
H^*(\widetilde{X};\mathbb{Q})\cong{(H_*(\widetilde{X};\mathbb{Q}))^*}\cong {(H_*(C_*({\widetilde{X}})}\otimes{\mathbb{Q}))^*},
\end{equation}
\begin{equation}\nonumber
H^*(C^*(C_*({\widetilde{X}})\otimes{\mathbb{Q}}))\cong{(H_*(C_*({\widetilde{X}})}\otimes{\mathbb{Q}))^*}.
\end{equation}
 From $(\ref1)-(\ref2)$, the inclusion $i:X\rightarrow{\widetilde{X}}$ is a rational homotopy equivalence.
\end{proof}

\begin{Remark}
 Note that the following isomorphisms
		\begin{align}
		H^*({X};\mathbb{Q})&\cong{(H_*({X};\mathbb{Q}))^*}\cong{(H_*(C_*({X})}\otimes{\mathbb{Q}))^*}\\
		&\cong{(H_*(C_*(\widetilde{X})\otimes{\mathbb{Q}))^*}}\cong{H^*(C^*(C_*({\widetilde{X}})\otimes{\mathbb{Q}}))}\nonumber\\
		&\cong{(H_*(\widetilde{X};\mathbb{Q}))^*}\cong {H^*(\widetilde{X};\mathbb{Q})}.\nonumber
		\end{align}
\end{Remark}

Let $\widetilde\partial_*=\sum\limits_{i}(-1)^id_i$, where $d_i$ are the face maps. For convenience, we denote $d_i(\tau)$ by
$\tau\circ \langle e_0,\cdots,\hat{e}_i,\cdots,e_n\rangle$ for a cell $\tau\in C_*(\widetilde{X})$.

\begin{Lemma}With notations as in Lemma \ref{3.1}.
	There exists an isomorphism of cochain algebras,
	 $$\psi: (\Lambda\widetilde{V}/I,d)\rightarrow{(C^*(C_*({\widetilde{X}})\otimes{\mathbb{Q}}),\widetilde\partial^*),}$$ where $(I,d)\subset(\Lambda\widetilde{V},d)$ is a differential ideal.
\end{Lemma}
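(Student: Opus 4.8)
The plan is to construct a surjective morphism of cochain algebras $\bar\psi:(\Lambda\widetilde V,d)\to(C^*(C_*(\widetilde X)\otimes\mathbb Q),\widetilde\partial^*)$ and then take $I=\Ker\bar\psi$; the map induced on the quotient will be the desired $\psi$. The target is finite dimensional with a basis consisting of the cochains dual to the cells of $\widetilde X$, namely the cells of $X$ (which, since $\partial_*=0$, are dual to a basis of $H^*(X;\mathbb Q)$, i.e. to the classes $\varphi(v_j)$ and the nonzero products in $E$) together with the pairs $\sigma_{v_{i_1}\cdots v_{i_l}}$ and $\tau_{v_{i_1}\cdots v_{i_l}}$ attached in Lemma \ref{3.1}.

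First I would define $\bar\psi$ on generators. Fixing for each indecomposable a representing cell of $X$, set $\bar\psi(v_j)=e_{v_j}^*$, the cochain dual to the cell corresponding to $\varphi(v_j)$, and set $\bar\psi(w_{v_{i_1}\cdots v_{i_l}})$ to be a suitable nonzero multiple of $\sigma^*_{v_{i_1}\cdots v_{i_l}}$. Since $\Lambda\widetilde V$ is free as a graded-commutative algebra, this extends uniquely to an algebra morphism $\bar\psi$. Both $d$ and $\widetilde\partial^*$ are derivations for the respective products, so to see that $\bar\psi$ is a cochain map it suffices to check $\bar\psi d=\widetilde\partial^*\bar\psi$ on generators. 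On each $v_j$ this reads $\widetilde\partial^* e_{v_j}^*=0$, which holds because the only cells carrying a nonzero boundary are the $\tau$'s and $\widetilde\partial_*\tau=c\,\sigma$ involves no cell of $X$. On each $w_{v_{i_1}\cdots v_{i_l}}$ it reads $\bar\psi(v_{i_1}\cdots v_{i_l})=\widetilde\partial^*\bar\psi(w_{v_{i_1}\cdots v_{i_l}})$, and the right-hand side equals a nonzero multiple of $\tau^*_{v_{i_1}\cdots v_{i_l}}$ by the identity $\widetilde\partial^*\sigma^*_{v_{i_1}\cdots v_{i_l}}=(-1)^{k-1}c_{v_{i_1}\cdots v_{i_l}}\tau^*_{v_{i_1}\cdots v_{i_l}}$ established in Lemma \ref{3.1}.

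The heart of the argument is the evaluation of cup products of the generator images at the cochain, rather than cohomology, level. Using the explicit formula $(f\cup h)(\tau)=(-1)^{pq}f(\partial_{p+1}\cdots\partial_{p+q}\tau)\,h(\partial_0\cdots\partial_0\tau)$ together with the attaching map $g$ of each $\tau$-cell from Lemma \ref{3.1} (which restricts to $\tilde r$ on the faces $\lambda_1,\dots,\lambda_k$ and to $\tilde f_{k-1}$ on $\langle e_1,\dots,e_k\rangle$), I would show that for a monomial $v_{t_1}\cdots v_{t_k}$ the product $\bar\psi(v_{t_1})\cup\cdots\cup\bar\psi(v_{t_k})$ is: a nonzero scalar multiple of the cochain dual to the cell corresponding to $\varphi(v_{t_1}\cdots v_{t_k})$ when this class lies in $E$; a nonzero multiple of $\tau^*_{v_{t_1}\cdots v_{t_k}}$ when $v_{t_1}\cdots v_{t_k}$ is a good object; and zero otherwise. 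It is precisely here that hypotheses (i) or (ii) enter: when $E=\emptyset$ all higher products vanish in cohomology, while the linear independence of $E$ in case (ii) ensures that the nonzero decomposable classes are detected without cancellation, so that the images of the monomials exhaust the cell-dual basis of the target and all relations among them are collected into $I$.

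Granting these computations, surjectivity of $\bar\psi$ is immediate, since every cell-dual cochain $e_{v_j}^*$, every dual of a nonzero-product cell, every $\sigma^*$ and every $\tau^*$ lies in the image. Setting $I=\Ker\bar\psi$, the fact that $\bar\psi$ is a morphism of cochain algebras makes $I$ a differential ideal: it is an ideal because $\bar\psi$ is multiplicative, and $dI\subseteq I$ because $\bar\psi(dx)=\widetilde\partial^*\bar\psi(x)=0$ for $x\in I$. The induced map $\psi:(\Lambda\widetilde V/I,d)\to(C^*(C_*(\widetilde X)\otimes\mathbb Q),\widetilde\partial^*)$ is then a bijective morphism of cochain algebras, hence an isomorphism. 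The main obstacle is the cochain-level cup-product computation of the previous paragraph: one must track the front and back faces of the simplices $\tau_{v_{i_1}\cdots v_{i_l}}$ under the carefully chosen attaching map $g$ and verify that the resulting cochain is a single scalar multiple of one cell-dual with no extraneous terms, which is exactly what forces the use of conditions (i) and (ii).
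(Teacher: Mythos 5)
Your overall architecture (build a surjective cochain-algebra morphism out of $(\Lambda\widetilde V,d)$ and quotient by its kernel) is compatible with the paper, which instead writes down the ideal $I$ explicitly (generated by $C\cdot\Lambda^{\geq1}\widetilde V$ and $D\cdot\Lambda^{\geq1}\widetilde V$, where $C$ is the set of good objects and $D$ the set of corresponding $w$'s) and proves injectivity of the induced map directly. However, there is a genuine gap at the step you yourself call the heart of the argument. You set $\bar\psi(v_j)=e_{v_j}^*$, the dual of a single cell of $X$. First, there is no reason a class $\varphi(v_j)$ should be represented by the dual of one cell; the paper takes a general representative $F_{v_{i_k}}=\sum_j a_j z_j^*$ with $z_j\in C_*(X)\otimes\mathbb{Q}$. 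More seriously, a cochain supported only on the cells of $X$ cannot make $\bar\psi$ a cochain map: on the generator $w_{v_{i_1}\cdots v_{i_l}}$ you need $\bar\psi(v_{i_1})\cup\cdots\cup\bar\psi(v_{i_l})=\widetilde\partial^*\bar\psi(w_{v_{i_1}\cdots v_{i_l}})=\lambda\,\tau^*_{v_{i_1}\cdots v_{i_l}}$ with $\lambda\neq 0$, but evaluating the left-hand side on $\tau_{v_{i_1}\cdots v_{i_l}}$ via the formula $(f\cup h)(\tau)=\pm f(\partial_{p+1}\cdots\partial_{p+q}\tau)\,h(\partial_0\cdots\partial_0\tau)$ involves only iterated front and back faces of $\tau_{v_{i_1}\cdots v_{i_l}}$, and by construction of the attaching map $g$ all of these land in the closure of $\sigma_{v_{i_1}\cdots v_{i_l}}$, not in $X$. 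Hence each factor $e^*_{v_{i_j}}$ vanishes there and the product evaluates to $0$ on $\tau_{v_{i_1}\cdots v_{i_l}}$, not to a nonzero multiple of $\tau^*_{v_{i_1}\cdots v_{i_l}}$. The paper's construction exists precisely to repair this: each representative is extended to $\widetilde F_{v_{i_k}}$ by decreeing the value $1$ on a designated face $b^{v_{i_1}\cdots v_{i_l}}_{v_{i_k}}$ of every new cell whose good object has $v_{i_k}$ as a factor (and $0$ on a complement), and only then does the computation $\widetilde F_{v_{i_1}}\cdots\widetilde F_{v_{i_l}}=\epsilon_{v_{i_1}\cdots v_{i_l}}\tau^*_{v_{i_1}\cdots v_{i_l}}$ go through. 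Your proposal never specifies the values of $\bar\psi(v_j)$ on these new simplices, which is where all the work of the lemma lies.

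A secondary point: you locate the use of hypotheses (i) and (ii) inside this cup-product computation, but the identity $\widetilde F_{v_{i_1}}\cdots\widetilde F_{v_{i_l}}=\pm\tau^*_{v_{i_1}\cdots v_{i_l}}$ for good objects requires neither hypothesis. In the paper, (i)/(ii) enter when proving injectivity of $\psi$: one shows $\sum a_{v_{i_1}\cdots v_{i_l}}F_{v_{i_1}}\cdots F_{v_{i_l}}\neq 0$ because $(H^*(m\otimes\mathbb{Q}))(\varphi(c_1))\neq 0$, which is exactly where linear independence of $E$ (or its emptiness) is needed. Your surjectivity argument also leans on the unjustified assumption that the cells of $X$ are in bijection with the basis $\{\varphi(v_j)\}\cup E$ of $H^+(X;\mathbb{Q})$; the paper instead concludes via a finite-type comparison after establishing injectivity.
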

\begin{proof}	
For a non-zero element $v_{i_k}\not=0\in{V}$, from the structures of $X$ and $\widetilde{X}$, there exists a non-zero element ${F_{v_{i_k}}}=\sum_{j}a_j z_j^*\in{C^*(C_*({{X}})\otimes{\mathbb{Q}})}$ representing the same cohomology class with $v_{i_k}$, where  $z_j\in{C_*({{X}})\otimes{\mathbb{Q}}}$.

Let $A_{v_{i_k}}=\{\text{good objects}~ v_{i_1}\cdots{v_{i_l}}|~v_{i_k}~\text{is a factor of} ~v_{i_1}\cdots{v_{i_l}}\}$. For each element $v_{i_1}\cdots{v_{i_l}}\in A_{v_{i_k}}$, set ${\rm deg}(v_{i_1}\cdots{v_{i_l}})=k,~{\rm deg}(v_{i_j})=t_j\geq2,~1\leq{j}\leq{l}$, assign to it
 an element  $b_{v_{i_k}}^{v_{i_1}\cdots{v_{i_l}}}=$\\
$\tau_{v_{i_1}\cdots{v_{i_l}}}\circ{\langle\hat{e}_1,\cdots,\hat{e}_{(t_1+\cdots+t_{k-1}-1)},e_{(t_1+\cdots+t_{k-1})},\cdots,e_{(t_1+\cdots+t_{k})},\hat{e}_{(t_1+\cdots+t_{k}+1)},\cdots,\hat{e}_k}\rangle.$ Let $B_{v_{i_k}}=\{b_{v_{i_k}}^{v_{i_1}\cdots{v_{i_l}}}|~v_{i_1}\cdots{v_{i_l}}\in A_{v_{i_k}}\}$,
$C_*({{\widetilde{X}}})\otimes{\mathbb{Q}}=C_*({X})\otimes{\mathbb{Q}}\oplus \mathbb{Q}B_{v_{i_k}}\oplus H$,
 and define $\widetilde F_{v_{i_k}}\in{C^*(C_*({\widetilde{X}})\otimes{\mathbb{Q}})}$ satisfying
 \begin{equation}\nonumber
 \begin{cases}
 \widetilde F_{v_{i_k}}(z)=F_{v_{i_k}}(z),
  &{\rm if}~z\in{C_*({{X}})\otimes{\mathbb{Q}}};\\
 \widetilde F_{v_{i_k}}(b_{v_{i_k}}^{v_{i_1}\cdots{v_{i_l}}})=1, &{\rm if}~b_{v_{i_k}}^{v_{i_1}\cdots{v_{i_l}}}\in B_{v_{i_k}}
;\\
\widetilde F_{v_{i_k}}(z)=0,~ &\text{\rm if}~z\in H.
 \end{cases}
 \end{equation}	
Note that $H^*(m\otimes\mathbb{Q}):H^*(X;\mathbb{Q})\rightarrow H^*(C^*(C_*({{X}})\otimes{\mathbb{Q}}))$
(Theorem \ref{2.1}) is an isomorphism of cochain algebras and the differential $\partial^*$ in $C^*(C_*({{X}})\otimes{\mathbb{Q}})$ is trivial. Then we have that, if $v_{i_1}\cdots{v_{i_l}}$ is a good object, then
 $$F_{v_{i_1}}\cdots{ F_{v_{i_l}}}=(H^*(m\otimes\mathbb{Q}))(\varphi( v_{i_1}\cdots{v_{i_l}}))=0.$$
Set $C_*({{\widetilde{X}}})\otimes{\mathbb{Q}}=C_*({X})\otimes{\mathbb{Q}}\oplus \mathbb{Q}\tau_{v_{i_1}\cdots{v_{i_l}}}\oplus G$. Direct computations show that
\begin{equation}\nonumber
\widetilde F_{v_{i_1}}\cdots{\widetilde F_{v_{i_l}}}(z)=
\begin{cases}
 F_{v_{i_1}}\cdots{ F_{v_{i_l}}}(z)=0, &{\rm if}~z\in C_k({{X}})\otimes{\mathbb{Q}};\\
\epsilon_{v_{i_1}\cdots{v_{i_l}}} a\tau^*_{v_{i_1}\cdots{v_{i_l}}}(z), &{\rm if}~z=a\tau_{v_{i_1}\cdots{v_{i_l}}}\in C_k({{\widetilde{X}}})\otimes{\mathbb{Q}}, ~a\in{\mathbb{Q}},\\&{\rm where}~ \epsilon_{v_{i_1}\cdots{v_{i_l}}}=1 ~or~-1;\\
0, &{\rm if}~z\in G.
\end{cases}
\end{equation}
Hence for a good object $v_{i_1}\cdots{v_{i_l}}$, we have that
 $$\widetilde F_{v_{i_1}}\cdots{\widetilde F_{v_{i_l}}}= \epsilon_{v_{i_1}\cdots{v_{i_l}}}\tau^*_{v_{i_1}\cdots{v_{i_l}}}\in C^*(C_*({\widetilde{X}})\otimes{\mathbb{Q}}).$$

Now we assert that the following three equalities hold.
 \begin{equation}\nonumber
 \begin{cases}
 (\widetilde F_{v_{i_1}}\cdots{\widetilde F_{v_{i_l}}})\cdot\widetilde F_{v_{i_t^{\prime}}}&=0, ~\text{\rm if}~ v_{i_1}\cdots{v_{i_l}} ~\text{\rm is a good object};\\
 \widetilde F_{v_{i_t^{\prime}}}\cdot\sigma^*_{v_{i_1}\cdots{v_{i_l}}}&=0;\\
 \sigma^*_{v_{i_1}\cdots{v_{i_l}}}\cdot\sigma^*_{v_{i_1}^{\prime}\cdots{v_{i_m}^{\prime}}}&=0.
 \end{cases}
 \end{equation}
 It suffices to show that the first equality holds. The other two equalities can be shown similarly.
 Set ${\rm deg} (v_{i_1}\cdots{v_{i_l}})=k, ~{\rm deg}(v_{i_t^{\prime}})=m$, then $(\widetilde F_{v_{i_1}}\cdots{\widetilde F_{v_{i_l}}})\cdot\widetilde F_{v_{i_t^{\prime}}}\in C^{k+m}(C_{k+m}({{\widetilde{X}}})\otimes\mathbb{Q})$.
  If $z\in  C_{k+m}({{\widetilde{X}}})\otimes{\mathbb{Q}},~z=\sum\limits_iz_i^{\prime}+\sum\limits_jz_j^{\prime\prime}$, where $z_i^{\prime}\in C_{k+m}({{{X}}})\otimes{\mathbb{Q}}$ and $z_j^{\prime\prime}=a\sigma_{v_{t_1}\cdots{v_{t_m}}} \text{\rm or } b\tau_{v_{j_1}\cdots{v_{j_r}}}\in C_{k+m}({{\widetilde{X}}})\otimes{\mathbb{Q}},~a,b\in\mathbb{Q}$. Consider
  $(\widetilde F_{v_{i_1}}\cdots{\widetilde F_{v_{i_l}}})\cdot\widetilde F_{v_{i_t^{\prime}}}(z_i^{\prime})$ and $(\widetilde F_{v_{i_1}}\cdots{\widetilde F_{v_{i_l}}})\cdot\widetilde F_{v_{i_t^{\prime}}}(z_j^{\prime\prime})$.
  Note that $\tau_{v_{i_1}\cdots{v_{i_l}}}\cap X=\{x_0\}$ and  $\tau_{v_{i_1}\cdots{v_{i_l}}}\cap\tau_{v_{i_1^{\prime\prime}}\cdots{v_{i_n^{\prime\prime}}}}~(v_{i_1}\cdots{v_{i_l}}\neq v_{i_1^{\prime\prime}}\cdots{v_{i_n^{\prime\prime}}})=\{x_0\}$, it follows that $(\widetilde F_{v_{i_1}}\cdots{\widetilde F_{v_{i_l}}})\cdot\widetilde F_{v_{i_t^{\prime}}}(z_i^{\prime})=0$ and $(\widetilde F_{v_{i_1}}\cdots{\widetilde F_{v_{i_l}}})\cdot\widetilde F_{v_{i_t^{\prime}}}(z_j^{\prime\prime})=0$. Thus the first equality holds.

  Furthermore, from the attaching map for $\tau_{v_{i_1}\cdots{v_{i_l}}},$  it is obvious that  $b_{v_{i_k}}^{v_{i_1}\cdots{v_{i_l}}}\neq \{x_0\}$ and
 $b_{v_{i_k}}^{v_{i_1}\cdots {v_{i_l}}}\neq\tau_{v_{i_1^{\prime\prime}}\cdots{v_{i_m^{\prime\prime}}}}$~($v_{i_1}\cdots{v_{i_l}}$ may equal  $v_{i_1^{\prime\prime}}\cdots {v_{i_m^{\prime\prime}}}$).
Then if $z\in C_*({\widetilde{X}})\otimes{\mathbb{Q}}$,  $\widetilde{\partial}^*(\widetilde{F}_{v_{i_k}})(z)=\widetilde{F}_{v_{i_k}}(\sum a_{v_{i_1}\cdots{v_{i_l}}}\tau_{v_{i_1}\cdots{v_{i_l}}})=0,~a_{v_{i_1}\cdots{v_{i_l}}}\in\mathbb{Q}$, that is, $\widetilde{\partial}^*(\widetilde{F}_{v_{i_k}})=0$.

  Let $(I,d)\subset(\Lambda\widetilde{V},d)$ be a differential ideal generated by $C\cdot\Lambda^{\geq1}\widetilde{V}$ and $D\cdot\Lambda^{\geq1}\widetilde{V}$, where $C$=\{\text{all good objects}\}, $D=\{w_{v_{i_1}\cdots{v_{i_l}}}|~{v_{i_1}\cdots{v_{i_l}}}\in C\}$.
Now we define a linear map of degree zero $\psi: (\Lambda\widetilde{V}/I,d)\rightarrow{(C^*(C_*({\widetilde{X}})\otimes{\mathbb{Q}}),\widetilde\partial^*)}$ as follows:
\begin{equation}\nonumber
\begin{cases}
\psi(v_{i_k})=\widetilde{F}_{v_{i_k}}, &{\rm if}~v_{i_k}\in{V};\\
{\psi}(w_{v_{i_1}\cdots{v_{i_l}}})=(\frac{\epsilon_{v_{i_1}\cdots{v_{i_l}}}}{(-1)^{k-1}c_{v_{i_1}\cdots{v_{i_l}}}}) \sigma^*_{v_{i_1}\cdots{v_{i_l}}},
&~{\rm deg}(v_{i_1}\cdots{v_{i_l}})=k,
\end{cases}
\end{equation}	
and then extend $\psi$ to a morphism of graded algebras. Because $\widetilde\partial^*(\psi(v_{i_1}\cdots{v_{i_l}}))=\widetilde\partial^*(\widetilde F_{v_{i_1}}\cdots{\widetilde F_{v_{i_l}}})=0=\psi(d(v_{i_1}\cdots{v_{i_l}}))$ and $\widetilde\partial^*(\psi(w_{v_{i_1}\cdots{v_{i_l}}}))=\epsilon_{v_{i_1}\cdots{v_{i_l}}}\tau^*_{v_{i_1}\cdots{v_{i_l}}}=\widetilde F_{v_{i_1}}\cdots{\widetilde F_{v_{i_l}}}=\psi(v_{i_1}\cdots{v_{i_l}})=\psi(d(w_{v_{i_1}\cdots{v_{i_l}}}))$,
$\psi$ is essentially  the desired morphism  from $(\Lambda\widetilde{V}/I,d)$ to $(C^*(C_*({\widetilde{X}})\otimes{\mathbb{Q}}),\widetilde\partial^*)$.

Now we show that $\psi$ is injective. Suppose $$c=c_1+c_2\neq 0\in (\Lambda\widetilde{V}/I,d),$$
where $c_1=\sum a_{v_{i_1}\cdots{v_{i_l}}} v_{i_1}\cdots{v_{i_l}},~c_2=\sum b_{v_{j_1}\cdots{v_{j_r}}} w_{v_{j_1}\cdots{v_{j_r}}}$,
  and $a_{v_{i_1}\cdots{v_{i_l}}},~b_{v_{j_1}\cdots{v_{j_r}}}\in\mathbb{Q}$.
  If $c_1=0$, it is obvious that $\psi(c)=\psi(c_2)\neq 0$. Now we consider the case that $c_1\neq 0$. Note that
  \begin{equation}\label{34}
  (H^*(m\otimes\mathbb{Q}))(\varphi(c_1))=\sum a_{v_{i_1}\cdots{v_{i_l}}} F_{v_{i_1}}\cdots{ F_{v_{i_l}}}
  \neq 0.
  \end{equation}
 It follows that $\psi(c_1)=\sum a_{{v_{i_1}\cdots{v_{i_l}}}}\widetilde F_{v_{i_1}}\cdots{\widetilde F_{v_{i_l}}}\neq 0$.
 From $(\ref{34})$,
  there exists an element $z\in C_*({{X}})\otimes{\mathbb{Q}}$ such that
$(\sum  a_{v_{i_1}\cdots{v_{i_l}}}F_{v_{i_1}}\cdots{ F_{v_{i_l}}})(z)\neq 0$.
Then  $\psi(c_1)\neq a\psi(c_2),~a\in\mathbb{Q}$.
Thus $\psi(c)\not=0$, showing that $\psi$ is injective.
 Note that $H^*(X; \mathbb{Q})$ is of finite dimension. From the construction of $\widetilde{X}, ~C_*({\widetilde{X}})\otimes{\mathbb{Q}}$ is of finite type,
then both $\Lambda\widetilde{V}$ and  $C^*(C_*({\widetilde{X}})\otimes{\mathbb{Q}})$ are of finite type.
Thus $\psi$ is an isomorphism.
	\end{proof}

\begin{Lemma}
	$(C^*(C_*({\widetilde{X}})\otimes{\mathbb{Q}}),\widetilde\partial^*)$ is formal.
\end{Lemma}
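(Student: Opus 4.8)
The plan is to transport the statement across the isomorphism $\psi$ of the previous lemma and then exhibit a single quasi-isomorphism from $(\Lambda\widetilde{V}/I,d)$ onto its cohomology. Since $\psi:(\Lambda\widetilde{V}/I,d)\to(C^*(C_*(\widetilde{X})\otimes\mathbb{Q}),\widetilde\partial^*)$ is an isomorphism of cochain algebras, and since the Remark following Lemma \ref{3.1} already identifies $H^*(C^*(C_*(\widetilde{X})\otimes\mathbb{Q}))\cong H^*(\widetilde{X};\mathbb{Q})$, it suffices to prove that $(\Lambda\widetilde{V}/I,d)$ is formal; formality will then pass to the target through $\psi$.

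First I would descend the quasi-isomorphism $\widetilde{\varphi}$ of Lemma \ref{3.1} to the quotient. Because $\widetilde{\varphi}$ is a morphism of algebras with $\widetilde{\varphi}(w_{v_{i_1}\cdots v_{i_l}})=0$ and $\widetilde{\varphi}(v_{i_1}\cdots v_{i_l})=\varphi(v_{i_1}\cdots v_{i_l})=0$ for every good object, it annihilates each generator of $I$, so $\widetilde{\varphi}(I)=0$. As $I$ is a differential ideal and the projection $\pi:(\Lambda\widetilde{V},d)\to(\Lambda\widetilde{V}/I,d)$ is a morphism of cochain algebras, $\widetilde{\varphi}$ factors as $\widetilde{\varphi}=\overline{\varphi}\circ\pi$ through a well-defined morphism of cochain algebras $\overline{\varphi}:(\Lambda\widetilde{V}/I,d)\to(H^*(\widetilde{X};\mathbb{Q}),0)$.

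It then remains to prove that $\overline{\varphi}$ is a quasi-isomorphism; granting this, $\overline{\varphi}$ itself exhibits $(\Lambda\widetilde{V}/I,d)$ as formal. Surjectivity of $H(\overline{\varphi})$ is immediate from $\widetilde{\varphi}=\overline{\varphi}\circ\pi$ and the surjectivity of $H(\widetilde{\varphi})$ established in Lemma \ref{3.1}. For the cohomology of the source I would compute directly in $\Lambda\widetilde{V}/I$: since $dv=0$ while $dw_{v_{i_1}\cdots v_{i_l}}$ equals the (surviving) good object $v_{i_1}\cdots v_{i_l}$, and since every monomial that is a product of some $w$, or of a good object, with a factor of positive degree has been killed in the quotient, the cocycles are spanned by the surviving monomials in the $v$'s and the coboundaries are exactly the span of the good objects. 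Thus $H(\Lambda\widetilde{V}/I,d)$ is represented by the products $v_{t_1}\cdots v_{t_r}$ that contain no good object as a sub-factor, and $\overline{\varphi}$ sends such a representative to $\varphi(v_{t_1})\cup\cdots\cup\varphi(v_{t_r})$.

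The main obstacle is the injectivity of $H(\overline{\varphi})$, i.e.\ showing that these surviving monomials map to linearly independent classes in $H^*(\widetilde{X};\mathbb{Q})$. Here the minimality built into the definition of a good object is decisive: if $\varphi(v_{t_1}\cdots v_{t_r})=0$ then some consecutive sub-factor already vanishes, and recursing produces a good object inside the monomial, so a surviving monomial has nonzero image and lands in $E$. Hypotheses (i) and (ii) then finish the argument exactly as in the injectivity step of Lemma \ref{3.1}: under (i) all products of length $\geq 2$ vanish, so the surviving monomials are only $1$ and the $v_i$, which $\overline{\varphi}$ carries onto a basis of $H^*(\widetilde{X};\mathbb{Q})$; under (ii) the linear independence of $E$ prevents a nonzero combination of surviving monomials from mapping to $0$. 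Once $H(\overline{\varphi})$ is an isomorphism, $(\Lambda\widetilde{V}/I,d)$ is formal, and transporting along $\psi$ shows that $(C^*(C_*(\widetilde{X})\otimes\mathbb{Q}),\widetilde\partial^*)$ is formal.
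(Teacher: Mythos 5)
Your proposal is correct and takes essentially the same route as the paper: the paper's entire proof is the displayed composite $(C^*(C_*(\widetilde{X})\otimes\mathbb{Q}),\widetilde\partial^*)\xrightarrow{\psi^{-1}}(\Lambda\widetilde{V}/I,d)\xrightarrow{\widetilde{\varphi}}(H^*(\widetilde{X};\mathbb{Q}),0)\cong(H^*(C^*(C_*(\widetilde{X})\otimes\mathbb{Q})),0)$, which is exactly your transport along $\psi$ followed by the descended map $\overline{\varphi}$. The only difference is that you explicitly verify what the paper leaves implicit, namely that $\widetilde{\varphi}$ kills $I$ and hence factors through the quotient, and that the induced map $\overline{\varphi}$ on $\Lambda\widetilde{V}/I$ (not just $\widetilde{\varphi}$ on $\Lambda\widetilde{V}$, which is what Lemma \ref{3.1} actually establishes) is still a quasi-isomorphism; supplying that check is a genuine improvement in rigor, not a change of method.
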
	
	\begin{proof}
		Note that $$ (C^*(C_*({\widetilde{X}})\otimes{\mathbb{Q}}),\widetilde\partial^*)\xrightarrow{\psi^{-1}}{(\Lambda{\widetilde{V}}/I,d)}\xrightarrow{\widetilde{\varphi}}{(H^*(\widetilde{X},\mathbb{Q}), 0)}\cong (H^*(C^*(C_*(\widetilde{X})\otimes{\mathbb{Q}})), 0).$$
		\end{proof}

\begin{Corollary}
	Both $\widetilde{X}$ and $\widetilde{X}_{\mathbb{Q}}$ are formal.
\end{Corollary}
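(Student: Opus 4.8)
The plan is to deduce the formality of $\widetilde{X}$ from the formality of the cochain algebra $(C^*(C_*(\widetilde{X})\otimes\mathbb{Q}),\widetilde\partial^*)$ established just above, by producing a chain of quasi-isomorphisms of cochain algebras linking $A_{PL}(\widetilde{X};\mathbb{Q})$ to it. Since formality is preserved under weak equivalence, transporting it along such a chain shows that $A_{PL}(\widetilde{X};\mathbb{Q})$ is weakly equivalent to its cohomology, which is by definition the assertion that $\widetilde{X}$ is formal.

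First I would bridge rational forms and singular cochains. Taking $K=S_*(\widetilde{X})$ to be the singular simplicial set of $\widetilde{X}$, Theorem \ref{2.5} furnishes the natural quasi-isomorphisms
\[
A_{PL}(\widetilde{X};\mathbb{Q})\xrightarrow{\ \sim\ }(C_{PL}\otimes A_{PL})(S_*(\widetilde{X}))\xleftarrow{\ \sim\ }C_{PL}(S_*(\widetilde{X}))\xrightarrow{\ \cong\ }C^*(S_*(\widetilde{X})),
\]
so $A_{PL}(\widetilde{X};\mathbb{Q})$ is weakly equivalent to the singular cochain algebra $C^*(S_*(\widetilde{X}))$ over $\mathbb{Q}$. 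Next I would pass from singular to cellular cochains: by Theorem \ref{2.1} the cellular chain model $m\colon(C_*(\widetilde{X}),\widetilde\partial_*)\to S_*(\widetilde{X})$ is a quasi-isomorphism, and it stays one after tensoring with $\mathbb{Q}$. Since $\mathbb{Q}$-linear dualization is exact, the induced cochain map
\[
m^*\colon C^*(S_*(\widetilde{X}))\longrightarrow\bigl(C^*(C_*(\widetilde{X})\otimes\mathbb{Q}),\widetilde\partial^*\bigr)
\]
is again a quasi-isomorphism. Concatenating, $A_{PL}(\widetilde{X};\mathbb{Q})$ is weakly equivalent to $(C^*(C_*(\widetilde{X})\otimes\mathbb{Q}),\widetilde\partial^*)$.

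By the preceding lemma this last cochain algebra is formal, i.e. weakly equivalent to $(H^*(\widetilde{X};\mathbb{Q}),0)$, so by transitivity $A_{PL}(\widetilde{X};\mathbb{Q})$ is weakly equivalent to $(H^*(\widetilde{X};\mathbb{Q}),0)$ and $\widetilde{X}$ is formal. For the rationalization, the rationalization map $\widetilde{X}\to\widetilde{X}_\mathbb{Q}$ induces an isomorphism on $\mathbb{Q}$-cohomology and hence a quasi-isomorphism $A_{PL}(\widetilde{X}_\mathbb{Q})\xrightarrow{\ \sim\ }A_{PL}(\widetilde{X};\mathbb{Q})$; as formality is a weak-equivalence invariant, $\widetilde{X}_\mathbb{Q}$ is formal as well.

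The step needing the most care is the multiplicative compatibility in the passage from singular to cellular cochains. Exactness of $\mathbb{Q}$-duality gives at once a quasi-isomorphism of the underlying complexes, but to run the argument inside the category of cochain algebras one must check that $m^*$ respects the cup products of Theorem \ref{2.5}(3). This is where one invokes that the cellular chain model is compatible with the simplicial face operators $\partial_i$ entering the product — exactly the structure already exploited above, where cells are regarded as singular simplices through their characteristic maps. Once this compatibility is in hand, the remaining assembly is routine.
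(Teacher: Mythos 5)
Your argument is correct and essentially the paper's: both transport formality from $(C^*(C_*(\widetilde{X})\otimes{\mathbb{Q}}),\widetilde\partial^*)$ to $A_{PL}(\widetilde{X};\mathbb{Q})$ by a chain of quasi-isomorphisms built from Theorem \ref{2.5} together with the cellular chain model $m$ of Theorem \ref{2.1}, and then pass to $\widetilde{X}_{\mathbb{Q}}$ via the rationalization map. The only (immaterial) difference is that you apply Theorem \ref{2.5} to the singular simplicial set and push $m$ through the functor $C^*$, whereas the paper applies Theorem \ref{2.5} to $C_*(\widetilde{X})\otimes\mathbb{Q}$ and pushes $m$ through $A_{PL}$; in either case the multiplicativity you worry about is automatic, since both $C^*(-)$ and $A_{PL}(-)$ are functors from simplicial objects to cochain algebras and $m$ is a simplicial map.
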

	\begin{proof}
	From Theorem \ref{2.5}, we have a sequence of quasi-isomorphisms of cochain algebras,
		\begin{equation}
		{(C^*(C_*({\widetilde{X}})\otimes{\mathbb{Q}}),\widetilde\partial^*)}\rightarrow \bullet\leftarrow (A_{PL}(C_*({\widetilde{X}})\otimes{\mathbb{Q}}), d)
		\xleftarrow{A_{PL}(m)}{(A_{PL}(S_*(\widetilde{X};\mathbb{Q})), d)}.
		\end{equation}
		Note that $(C^*(C_*({\widetilde{X}})\otimes{\mathbb{Q}}),\widetilde\partial^*)$ is formal. It follows that ${(A_{PL}(S_*(\widetilde{X};\mathbb{Q})), d)}$ is also formal, showing that $\widetilde{X}$ is formal.
		Recall that the inclusion $\widetilde{X}\rightarrow \widetilde{X}_{\mathbb{Q}}$ induces a rational homotopy equivalence. It follows that $\widetilde{X}_{\mathbb{Q}}$ is formal.
		\end{proof}

We now prove Theorem \ref{1.1}.\\
\begin{proof}[Proof of Theorem \ref{1.1}]
		For the CW complex $Y$, we can make use of Theorem \ref{2.2} to obtain a CW complex $X$ satisfying the conditions of Lemma \ref{3.1}. Then we have the following commutative diagram,
		$$
		\xymatrix{
			& \widetilde{X}\ar[d] &           	X \ar[l]\ar[d]^{i_X}\ar[r]^{f}   &   Y\ar[d]_{i_Y}  \\ &\widetilde{X}_{\mathbb{Q}}   &X_{\mathbb{Q}}\ar[l]\ar[r]^{f_{\mathbb{Q}}} &  Y_{\mathbb{Q}},}
		$$
		where $i_X$ and $i_Y$ are inclusions.
		Since the first horizontal row is a chain of quasi-isomorphisms, CW complexes $\widetilde{X}$ and $Y$ have the same rational homotopy type. Then we apply Theorem \ref{2.3} to obtain $$\widetilde{X}_{\mathbb{Q}}\simeq{Y_{\mathbb{Q}}}.$$ Since $\widetilde{X}_{\mathbb{Q}}$ is formal, $Y_{\mathbb{Q}}$ is formal.
		\end{proof}

\section{ An example}\label {44}
Denote by $\langle~\rangle$ the
Sullivan's realization functor \rm{\cite{Sullivan}} from commutative cochain algebras to
simplicial sets, and denote by $|\quad~|$ the Milnor's realization functor \rm{\cite{Milnor}}  from simplicial sets to CW complexes. Then the spatial realization of a commutative cochain algebra $(A,d)$ is the CW complex $|\langle A, d  \rangle|$ (abbreviated as $|A, d|$ thereafter).

\begin{Example}
	With notations as in \cite[Theorem 17.10]{06bumu}.	Suppose $(\Lambda V, d)$ is a  Sullivan algebra with finite dimensional rational cohomology. Let $F=\{n_j\in\mathbb{Z}|~0<n_1<\cdots<n_j<\cdots,~ H^{n_j}(\Lambda V, d)\neq 0\}$.
	If $H(\Lambda V, d)$ vanishes in odd degrees and satisfies one of the following conditions:
	\begin{itemize}
		\item [(i)] F=$\emptyset$;
		\item [(ii)]
		for any $n_k\in F$,
		$n_k\neq\sum\limits_{1\leq i<k}a_in_i,~a_i\in\mathbb{Z},$
	\end{itemize}
	then $|\Lambda V,d|_{\mathbb{Q}}$ is formal. In particular, if $(\Lambda V, d)$ is a  Sullivan algebra with at most two non-trivial cohomology groups $H^0(\Lambda V, d)$ and $H^{2n}(\Lambda V, d),~n>0$,  $|\Lambda V,d|_{\mathbb{Q}}$ is formal.
\end{Example}
\begin{proof}	
From \cite[Theorem 17.10]{06bumu}, $|\Lambda V,d|$ is simply connected,  $\zeta_n: \pi_n(|\Lambda V,d|) \xrightarrow {\cong}{\rm Hom}(V, \mathbb{Q})$ is an isomorphism and $ m_{( \Lambda V,d)}: ( \Lambda V,d) \xrightarrow {\simeq} A_{PL}(|\Lambda V,d|)$ is a quasi-isomorphism. Then $|\Lambda V,d|$ is a rational space and $H(A_{PL}(|\Lambda V,d|)) {\cong} H( \Lambda V,d)$. Note that $|\Lambda V,d|_{\mathbb{Q}}{=}|\Lambda V,d|$. It follows that
$$H(|\Lambda V,d|_{\mathbb{Q}}) {=} H(|\Lambda V,d|)  {\cong} H(A_{PL}(|\Lambda V,d|)) {\cong} H( \Lambda V,d).$$ According to the hypothesis, $|\Lambda V,d|_{\mathbb{Q}}$ satisfies the conditions of Corollary \ref{1.2}, implying that $|\Lambda V,d|_{\mathbb{Q}} {=} |\Lambda V,d|$ is formal.
\end{proof}


\begin{thebibliography}{10}



 \bibitem{ik}
 I. K. Babenko, I. A. Taimanov.
 \newblock On non formal simply connected symplectic manifolds.
 \newblock {\em Preprint series, math.SG: 9811167v3}, 1998.

 \bibitem{benson}
 C. Benson,   C. Gordon.
\newblock K$\ddot{{{\rm a}}}$hler and symplectic structures on nilmanifolds.
actions.
\newblock {\em Topology}, 27: 513-518, 1988.

 \bibitem{06bumu}
 Yves F\'{e}lix, Stephen Halperin, and Jean-Claude Thomas.
 \newblock {Rational Homotopy Theory},  {\em Graduate Texts in
 	Mathematics}.
 \newblock Springer-Verlag, New York, 2001.

 \bibitem{lupton}
 G.Lupton, J. Oprea.
 \newblock Symplectic manifolds and formality.
 \newblock {\em J.Pure Appl. Algebra}, 91: 193-207, 1994.

 \bibitem{Milnor}
 J. Milnor.
 \newblock {The geometric realization of a semisimplicial complex}.
 \newblock {\em Annals of Math}, 65: 352-357, 1957.
\bibitem{Sullivan}
D. Sullivan.
\newblock {Infinitesimal computations in topology}.
\newblock {\em Publ. I.H.E.S.}, 47: 269-331, 1977.

\end{thebibliography}
\end{document}